\definecolor{Royalblue}{cmyk}{1,0.30,0.2,0.2}
\definecolor{greenmz}{cmyk}{0.5,0.30,0.2,0.2}
\definecolor{purple}{rgb}{0.62, 0.0, 0.77}
\newcommand{\nd}{\noindent}
\newcommand{\qq}{\qquad}
\newcommand{\diag} {\mbox{\rm diag}}
\newcommand{\rank} {\mbox{\rm rank}}
\newcommand{\Span} {\mathrm{span}\,}
\newcommand{\bmat}{\left[ \begin{matrix}}
\newcommand{\emat}{\end{matrix} \right]}
\newcommand{\beq}{\begin{equation}}
\newcommand{\eeq}{\end{equation}}
\newcommand{\tp}{^{\top}}
\newcommand{\bea}{\begin{eqnarray}}
\newcommand{\eea}{\end{eqnarray}}
\newcommand{\proof}{{\bf Proof.\ }}
\newcommand{\E}{{\mathbb E}\,}
\newcommand{\Rbb}{\mathbb R}
\newcommand{\Zbb}{\mathbb Z}
\newcommand{\xb}{\mathbf  x}
\newcommand{\yb}{\mathbf  y}
\newcommand{\wb}{\mathbf  w}
\newcommand{\vb}{\mathbf  v}
\newcommand{\eb}{\mathbf  e}
\newcommand{\nb}{\mathbf  n}
\def\xib{\boldsymbol{\xi}}
\def\chib{\boldsymbol{\chi}}
\def\etab{\boldsymbol{\eta}}
\def\af{\mathfrak{a}}
\newtheorem{theorem}{Theorem}
\newtheorem{definition}{Definition}
\newtheorem{proposition}{Proposition}
\newtheorem{corollary}{Corollary}
\newtheorem{example}{Example}
\newtheorem{remark}{Remark}
\begin{document}

\begin{frontmatter}

\title{Hidden Factor estimation in Dynamic Generalized Factor Analysis Models}

\author[Unipd]{Giorgio  Picci}\ead{picci@dei.unipd.it},    
\author[Unipd]{Lucia Falconi \corauthref{cor}}\ead{lucia.falconi@phd.unipd.it}  
\corauth[cor]{Corresponding author.},
\author[Unipd]{Augusto Ferrante}\ead{augusto@dei.unipd.it},               
\author[Unipd]{Mattia Zorzi}\ead{zorzimat@dei.unipd.it}

\address[Unipd]{Department of Information Engineering, University of Padova, Via Gradenigo 6/B, 35131 Padova, Italy}  
          
\begin{keyword}              
Generalized Factor Analysis, Factor estimation, Kalman filtering, Kalman predictor,          
Dynamic Factor models.                                                      
\end{keyword}

\begin{abstract}                          
This paper deals with the estimation of the hidden factor  in  Dynamic Generalized Factor Analysis via a generalization of Kalman filtering.   Asymptotic consistency is discussed and it is shown that the Kalman one-step predictor is not the right tool while the pure filter yields a consistent estimate.
\end{abstract}

\end{frontmatter}

\section{Introduction} 

Factor Analysis has a long history; it has apparently first been introduced by psychologists
and  successively it has been studied and applied in various branches of Statistics and Econometrics: \cite{spearman-904,burt_1909,Ledermann-37,Ledermann-39,Bekker-L-87,Lawley-M-71}. Several dynamic versions of these models  have  also been introduced
  in the  econometric literature already a long time ago, see e.g. \cite{geweke_1977,picci_pinzoni_86,PenaBox1987,PenaJSPI2006} and
references therein.\\
With a few  exceptions, see e.g. \cite{ciccone2017factor,8976281}, little attention has been paid to Factor Analysis models in the system and control engineering community until
 recently, when we have been  witnessing a revival of interest due to  certain generalizations which bypass the problem of intrinsic unidentifiability of these models by relaxing a crucial independence condition on the noise variables. These  models, called \emph{Generalized   Factor  Analysis   (GFA)} models, although initially motivated by  econometric applications, seem to  have a potential to be quite useful also in engineering applications due to their ability of modeling time series  data of large cross-sectional dimension. The static generalization was first rigorously discussed   by   \cite{chamberlain_1983,Chamberlain-R-83}. Later, Forni, Lippi and collaborators introduced Dynamic Generalized Factor  Analysis (DFGA) models and their estimation  in a series of widely quoted papers: \cite{Forni-H-L-R-2000,forni_lippi_2001,Forni-H-L-R-02,Forni-H-L-R-03}.  Although Generalized Factor Analysis models have been primarily of interest to econometricians, this  modeling paradigm has stimulated interest   in  the   System Identification community, see \cite{Anderson-Deistler_2008,Anderson-B-D-18,Deistler-Z-07,Deistler-2010,Bottegal-P-15}. 
Indeed, modern engineering applications are characterized  by interconnected systems with hundreds or thousands of variables which are mainly ``driven'' by a  small number of  hidden factors. In plain words, generalized factor models can describe a large amount of  sensors measuring quantities which provide  information on some (few) variables of interest. 
An example of urban pollution monitoring is illustrated at the end of the paper where 
the hidden factor vector  describes the average concentration of few pollutants over time in a certain city while the observed variables   
model the measurements taken from the sensors spread in the city.
Other example from the engineering literature may be found in \cite[p. 760-761]{Bottegal-P-15}.

As this paper is mainly addressed to the System and Control community, we shall not discuss Econometric applications which are abundantly referred to in the quoted literature. Our target is the growing interest of Control Engineering researchers  in large or complex networked systems acting in the presence of uncertainty and disturbances. Modeling and identification of these systems often brings    to models with such a high dimension to involve more parameters than available data.   Factor Analysis is believed to provide an answer to this issue as it  concentrates the explication of high dimensional data or observables in a (hopefully) small-dimensional vector of  latent factors. 
State space analysis of DGFA models seems to offer the ideal approach to this end. Indeed, 
it provides natural tools to estimate the hidden factor variable of a DGFA model which is the important problem addressed in this paper. Classical state space estimation techniques originated with  the work of R.E. Kalman seem to be the perfect device to tackle the problem. Some other works in  recent literature address factor estimation by  state-space  techniques, see e.g.  \cite{Kapetanios-M-09,Marcellino-17},
\cite{Doz-Giannone-Reichlin-11},  \cite{Doz-Giannone-Reichlin-11} and \cite{GIANNONE2008}.
We refer to \cite{Stock-W-011,Stock-W-015} for a survey mentioning some of the themes treated in these papers, especially the important problem of dynamic  factor estimation which will be the main concern of this paper. \\
For simplicity, we make the same   stationarity  assumptions as in these papers, although the main arguments of our paper can be shown to hold also if we replace stationarity by some
form of weak time dependence. The main difference in the setup of this paper  with respect to \cite{Doz-Giannone-Reichlin-11} is that we  discuss  the Kalman filter and the one-step ahead predictor for DGFA models instead of the  fixed-interval smoother.
The essential ideas behind the motivation and structure of Generalized Factor Analysis are shortly reviewed in the next two sections.

\subsection{Contributions of the paper}

We consider a class of state-space DGFA models and analyze the capability of Kalman estimators (filter and predictor) to provide a {\em perfect asymptotic estimate}
namely estimates for which the covariance of the estimation error tends to zero when the number of observed outputs diverges.
The main contributions of this paper are twofold: 
\begin{enumerate}
\item
We show that the one-step ahead Kalman predictor does not provide
a perfect asymptotic estimate of the hidden factors.
As a consequence, we show that a generalized dynamic Factor Analysis model
cannot be a predictor-based innovation model.

\item
We prove that the pure filter, on the contrary, provides
a perfect asymptotic estimate of the latent state variable.
Moreover, under reasonable assumptions, the estimation error converges weakly to the
idiosyncratic noise generating the data so that generalized dynamic Factor Analysis models
are weakly equivalent to a pure-filter type innovation models.
\end{enumerate}
Besides this two main contributions we derive some ancillary results on static
Factor Analysis models that are interesting, {\em per se}. \\
Notice that, by combining our two main results, we come to  a significant warning concerning
the application of  standard subspace identification methods to the identification of GDFA models.
In fact, subspace  methods are normally based on the  one-step ahead Kalman predictor model which we show  cannot produce a generative model exhibiting   GDFA features.
On the other hand, considering the pure filter estimate
in place of the predictor can overcome this problem.

\nd{\em Notations:} In this paper boldface symbols  will normally denote
random variables or random arrays, either finite or infinite. All random variables will be real,  zero-mean and with finite variance.  The symbol $H(\vb)$ denotes the standard
Hilbert space of random variables linearly generated by the
scalar components $\{\vb_1,\ldots,\vb_n,\ldots \}$ of a (possibly infinite)
family of random variables which we denote by $\vb$. For $\xib,\,\etab \in H(\vb)$, the inner product  is the mathematical expectation $\langle \xib,\,\etab\rangle:= \E [\xib \etab]$ which induces  the (variance) norm of  random variables by setting $\|\xib\|^2 = \E [\xib^2]$.   Convergence of random sequences will always be understood with  respect to this norm. Finally, $\mathrm{diag}\{a_1,\ldots a_n\}$ denotes the diagonal matrix whose elements in the main diagonal are $a_1\ldots a_n$. \\
This paper is concerned with 
{\em model-based} estimation,  in particular, with  the problem of the model-state estimation  by  a Kalman one-step ahead predictor, or by a Kalman filter. We highlight this point because in the econometric literature   
the term ``estimation" normally refers to a parameter estimation procedure for estimating the model and consequently the involved time-variables on the sole basis of the observed data.  

\section{Review of static Factor Analysis models}\label{Sec:FA}

A classical (static) {\it Factor Analysis\/} model is a representation of the form
$$
\yb=F\xb+\eb,
$$
of $N$ observable random variables $\yb=[\,\yb(1)\,\ldots\,\yb(N)\,]^{\top}$,
as linear combinations
of $q$ random {\it common factors\/} $\xb=[\,\xb_1\,\ldots\,\xb_q\,]^{\top}$,
plus uncorrelated ``noise" or ``error" terms
$\eb=[\,\eb(1)\,\ldots\,\eb(N)\,]^{\top}$. The columns $\{f_1,\;f_2,\;\ldots,f_{q}\}$ of matrix
$F$, called the {\it factor loadings}, can be chosen to be linearly independent. Moreover, the common factors can be  normalized in such a way  that $  \E [\xb\xb^{\top} ]= I$.    An essential part of the model
specification is  that the  $N$ components of the error $\eb$ should
be (zero-mean and) {\em mutually uncorrelated} random variables, i.e.
$$
 \E  [\xb\eb^{\top}]=0\,,  \qquad \E
[\eb\eb^{\top}]=\diag\{\sigma^{2}_1,\ldots,\sigma^{2}_N\}\,.
$$
The aim of these models is to provide an ``explanation" of the
mutual correlations of the observable variables $\yb(i)$ in terms
of a small number $q$ of common factors, in the sense that, setting: $ \hat{\yb}(k):=\sum f_i(k)\xb_i$, where $f_{i}(k)$ is the $k$-th component of $f_i$, one has exactly   $\E [\yb(i)\yb(j)]=\E[ \hat{\yb}(i)\hat{\yb}(j)]$,
for all $i\neq j$. Note that  a Factor Analysis representation then induces a decomposition of the covariance matrix $\Sigma$ of $\yb$ as
\begin{equation}
\label{SpL}\Sigma= F F^{\top} + \diag\{\sigma_{1}^2,\ldots,\sigma_{N}^2\}
\end{equation}
which can be seen as a special kind of \emph{low rank  plus sparse} decomposition of a covariance matrix, see \cite{Chandra-etal-011}, a diagonal covariance  matrix  being as sparse as one could possibly ask for.

Although providing a quite natural and
useful data compression scheme, factor models in many circumstances suffer from a
serious non-uniqueness problem coming from the fact that, even for a fixed dimension $q$ there are in
general many (possibly infinitely many) statistically non-equivalent Factor Analysis models describing the same  family of observables $\yb(1),\ldots,\yb(N)$. In addition, determining the minimal integer $q$ for which a decomposition as in (\ref{SpL}) holds for a given symmetric positive definite matrix $\Sigma$ has been an open problem since the beginning of the last century.  Moreover, there are in general
many  minimal Factor Analysis models (say with $F$'s of the same rank $q$ and   normalized factors)
representing a fixed $N$-tuple of random variables $\yb$. 
This inherent nonuniqueness of Factor Analysis models has been   called ``factor indeterminacy"
and corresponds to  {\em unidentifiability} in the systems and control language.
The overlooking of this unidentifiability issue and the acritical usage of Factor Analysis models 
have been  vehemently criticized by Kalman in a series of papers, see e.g.  \cite{Kalman-83,KalmanDFM1983}.

One may try to get uniqueness by giving up or mitigating the requirement of uncorrelatedness of the components of  $\eb$. Obviously this tends to make the problem  ill-defined  as the basic goal  of uniquely splitting the external signal into a noiseless component plus ``additive noise'' is made vacuous, unless some  extra assumptions are made on the model and on the very notion of ``noise''.   Quite surprisingly, for  models describing an {\em infinite} number of observables a meaningful weakening of the property of uncorrelation of the components of $\eb$ can be introduced, so as to guarantee  the uniqueness of the decomposition.

\section{Generalized Factor Analysis models}\label{Idiosync}

The covariance matrix of an  infinite-dimensional  zero mean  vector $\yb=[\yb(k)]_{k=1,2,\ldots}$ is
formally written as  $\Sigma := \mathbb{E}  \yb \yb^{\top} $. We let
$\Sigma_N$ indicate  the top-left $N \times N$ block of $\Sigma$,
equal to the covariance matrix of the subvector made of the  first $N$ components of $\yb$ denoted    $\yb_N$.
The inequality $\Sigma > 0$  means that all submatrices $\Sigma_N$
of $\Sigma$  are   positive  definite, which we shall always assume
in the following.

Let $\ell^2(\Sigma)$ denote the Hilbert space of infinite sequences
$a:= \{a(k),\,k \in \mathbb{N}\}$ such that $\| a \|^2_\Sigma :=
a^{\top} \Sigma a < \infty$. When $\Sigma = I$, we
simply use the symbol $\ell^2$ and denote the corresponding norm with the symbol $\|\cdot\|$.

A side question discussed in the appendix is the relation between $\ell^2$ and $\ell^2(\Sigma)$: indeed,
for $N$ finite it is obvious that $\ell^2=\ell^2(\Sigma)$ since $\lambda_{\rm max}I_N >\Sigma > \lambda_{\rm min}I_N$, $\lambda_{\rm max}$ and $\lambda_{\rm min}$ being the maximum and minimum eigenvalue
of $\Sigma$, respectively.

\medskip

\begin{definition}[\cite{forni_lippi_2001}]
  Let $\af:=\{a_n,\, n \in \mathbb{N}\}$ be a sequence of elements
of  the space $\ell^2 \cap \ell^2(\Sigma)$. We say that $\af=\{a_n,\; n
\in \mathbb{N}\}$ is an {\bf averaging sequence} (AS) if
$$\lim_{n\rightarrow\infty} \| a_n \| = 0.$$
\end{definition}
\begin{example}\label{example1}
{\em The sequence of elements in $\ell^2$
$$
a_n = \frac{1}{n}  [\,\underbrace{1 \, \ldots \,1}_{n}\,0\,\ldots\,
]^{\top}
$$
is an averaging sequence.}
\end{example}
An AS can be seen just as a sequence of linear functionals in
$\ell^2 \cap \ell^2(\Sigma)$ converging strongly to zero.  The
definition   is instrumental  to   the concept of {\bf idiosyncratic
sequence} of random variables which will be introduced next.
\begin{definition}[\cite{forni_lippi_2001}]
  We say that the random sequence $\yb$ is {\bf idiosyncratic} if   
 $ \; \lim_{n\rightarrow\infty} a_n^{\top} \yb = 0$  for any averaging
sequence $\af=\{\, a_n \in \ell^2 \cap \ell^2(\Sigma)\}$. The limit is understood in mean square.
\end{definition}

\begin{remark} {\em  In econometrics, ``idiosyncratic error'' is used to describe unobserved factors that impact the dependent variable. 
The  mathematical definition  of idiosyncratic disturbance was introduced in the static GFA case by  \cite{chamberlain_1983},
and \cite{Chamberlain-R-83}. Since then, models with infinite cross-sectional size and  idiosyncratic errors    have been analyzed by many authors. Beside
\cite{forni_lippi_2001}, that we take as our reference for the definition of idiosyncratic sequence, we mention \cite{Forni-Reichlin-96}, \cite{Stock-W}, \cite{Forni-Reichlin-98}, \cite{Forni-etal-2000}, \cite{Forni-H-L-R-02}, and \cite{Forni-H-L-R-03}.
We also mention that in \cite{forni_lippi_2001}, in place of averaging sequences of elements in $\ell^2$, a more general concept of dynamic averaging sequences is introduced. This concept seems to be useful  only when the observable $\yb$ is described in the spectral domain by spectral densities   in place of covariances.}
\end{remark}

\begin{example}
{\em A zero-mean sequence whose variance is a bounded operator in $\ell^2$ is idiosyncratic. In fact let the operator norm $\|\Sigma\|$ be bounded by $\alpha >0$. Then $\Sigma \leq \alpha I$ where $I$ is the identity operator, so that,
$$
\E [(a_n^{\top}\yb)^2]= a_n^{\top} \Sigma a_n \leq \alpha  \|a_n\|^2 \rightarrow 0
$$
for any  sequence $\{a_n\}$ tending to zero in norm.}
\end{example}
The characterization   actually goes both ways:
\begin{proposition}
A zero-mean sequence of random variables is idiosyncratic if and only  if its  variance matrix is a bounded operator in $\ell^2$. 
\end{proposition}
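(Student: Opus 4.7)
The $(\Leftarrow)$ direction has already been verified in the example immediately preceding the statement: if $\|\Sigma\|_{\mathrm{op}}\le\alpha$, then for every averaging sequence $\{a_n\}$ one has $\E[(a_n^{\top}\yb)^2]=a_n^{\top}\Sigma a_n\le\alpha\|a_n\|^2\to 0$. The real work lies in the $(\Rightarrow)$ implication, which I would attack through its contrapositive: assuming $\Sigma$ is \emph{not} a bounded operator on $\ell^2$, I would construct an averaging sequence $\{a_n\}$ along which the variance $\E[(a_n^{\top}\yb)^2]$ fails to vanish, thereby contradicting idiosyncrasy.

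The plan has three steps. First, since each $\Sigma_N$ is a leading principal submatrix of $\Sigma_{N+1}$, Cauchy interlacing shows that $\mu_N:=\lambda_{\max}(\Sigma_N)$ is a monotone non-decreasing sequence. A standard quadratic-form argument combined with the density of finitely supported sequences in $\ell^2$ shows that $\Sigma$ is bounded on $\ell^2$ if and only if $\sup_N\mu_N<\infty$: the PSD hypothesis $\Sigma>0$ lets one promote quadratic-form bounds to operator-norm bounds. Thus the assumption that $\Sigma$ is unbounded forces $\mu_N\to\infty$.

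Second, let $v_N\in\Rbb^N$ be a unit eigenvector of $\Sigma_N$ associated with $\mu_N$ and zero-pad it to obtain $b_N:=[v_N^{\top},0,0,\ldots]^{\top}$. Because $b_N$ has finite support, it lies in $\ell^2\cap\ell^2(\Sigma)$, and it satisfies $\|b_N\|=1$ and $b_N^{\top}\Sigma b_N=v_N^{\top}\Sigma_N v_N=\mu_N$. Third, rescale: set $a_N:=b_N/\mu_N^{1/4}$. Then $\|a_N\|^2=\mu_N^{-1/2}\to 0$, so $\{a_N\}$ is a genuine averaging sequence, while $\E[(a_N^{\top}\yb)^2]=a_N^{\top}\Sigma a_N=\mu_N^{1/2}\to\infty$. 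Hence $a_N^{\top}\yb$ does not converge to zero in mean square and $\yb$ fails to be idiosyncratic, completing the contrapositive.

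The one point that needs genuine care is the equivalence ``$\Sigma$ bounded as an operator on $\ell^2$ $\Leftrightarrow$ $\sup_N\mu_N<\infty$.'' For a PSD infinite matrix this is standard operator theory, but the argument must invoke both the bound on the quadratic form over finitely supported unit vectors and the density of those vectors in $\ell^2$. Once this equivalence is in hand, the rest of the proof is driven by the rescaling trick $a_N=b_N/\mu_N^{1/4}$, which balances a vanishing $\ell^2$ norm against a diverging $\ell^2(\Sigma)$ seminorm.
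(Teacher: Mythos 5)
Your proposal is correct. The $(\Leftarrow)$ direction is indeed exactly the example preceding the statement, and for the $(\Rightarrow)$ direction the paper itself offers no written proof --- it simply states the proposition and refers to the literature (Forni--Lippi and Bottegal--Picci), immediately noting the equivalent characterization that idiosyncrasy amounts to $\sup_N\|\Sigma_N\|\leq M$. Your argument supplies precisely that missing half in a self-contained way, and it is the natural one: reduce unboundedness of $\Sigma$ on $\ell^2$ to $\mu_N:=\lambda_{\max}(\Sigma_N)\to\infty$ (interlacing gives monotonicity, and for a positive semidefinite quadratic form the bound on finitely supported unit vectors does extend to an operator bound by density), then zero-pad top eigenvectors and rescale by $\mu_N^{1/4}$ so that $\|a_N\|^2=\mu_N^{-1/2}\to 0$ while $a_N^{\top}\Sigma a_N=\mu_N^{1/2}\to\infty$; since the $a_N$ are finitely supported they automatically lie in $\ell^2\cap\ell^2(\Sigma)$, so they form a legitimate averaging sequence along which $a_N^{\top}\yb$ fails to vanish in mean square. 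I see no gap; the only point worth flagging is the one you already flagged, namely that the equivalence between operator boundedness and $\sup_N\mu_N<\infty$ must be argued via the quadratic form and density, which is exactly the reading the paper adopts in the remark following the proposition.
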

It follows, as remarked in   \cite{forni_lippi_2001}, that a sequence $\yb$ is  idiosyncratic if and only if 
there exists $M$ such that $\forall N>0$, $\|\Sigma_N\|\leq M$,
where  $\Sigma_N$ is the covariance of the vector $\yb_N$
obtained by selecting the first $N$ components of $\yb$
(see \cite{Bottegal-P-15} for more details).
In particular, uncorrelated (white) sequences of zero-mean random variables having a uniformly  bounded  variance are idiosyncratic.

Consider a  zero-mean   finite variance   stochastic process $\yb :=\{\yb(k),\,k \in \Zbb_+\}$ represented  as a random column vector with an infinite number of components. 
\begin{definition}\label{def:AggrIdio}
A   {\em Generalized Factor Analysis Model (GFA)} of the process $\yb$ is  a representation by a finite  linear combination  of unobserved  random
variables plus  {\em idiosyncratic noise}, of the form 
\begin{equation}\label{GFAD}
\yb(k) = \sum_{i=1}^{q}f_i(k) \xb_i + \tilde{\yb}(k)\,,\qq k=1,2, \ldots
\end{equation}
where the random variables $\xb_i\,,\, i=1,\ldots,q$, are called the {\em common factors} and the deterministic infinite-dimensional real vectors $f_i$, called the {\em factor loadings},  are {\em strongly linearly independent} (the definition being reviewed in the appendix).  
The   components $\tilde{\yb}(k)$'s of the idiosyncratic noise are zero mean  random variables   orthogonal to $\xb$. 
\end{definition}

We  order the linear combinations $ \hat{\yb}(k):=\sum f_i(k)\xb_i$, with $k=1,2,\ldots$,   into an  infinite random vector $\hat{\yb}$ and likewise for the noise terms $\tilde{\yb}(k)$ so that \eqref{GFAD} can for short be written $\yb =\hat {\yb} + \tilde{\yb}$ where the {\em hidden or latent} component $\hat {\yb}$ is a linear function of $\xb$ e.g. $\hat {\yb} =F \xb$. Since the columns of $F$ are linearly  independent, $H(\hat {\yb})=H(\xb)$ and the common factors are just obtained by picking an orthonormal basis in  $H(\hat {\yb})$.

The key characteristics  which qualifies the idiosyncratic noise process $\tilde{\yb}$    is that by a deterministic linear averaging operation on the components of the observation vector $\yb$,  one can (asymptotically) eliminate the additive noise and reveal the common  factors.  The question is how to construct a suitable AS from the  model specifications to achieve this goal. This  will be one of the the main themes of the next  sections.  

\subsection{Application and interpretation of GFA models}
We now suggest an interpretation of GFA models in the framework of  applications 
to ensembles of a large number of agents   distributed in space and
interacting in a random fashion. We consider two distinct mechanisms:
\begin{enumerate}
\item {\bf Short distance interaction.} The idiosyncratic
covariances $ \{\tilde\sigma(k,j)\}$ describe  the mutual influence
of neighbouring  units having coordinates $\tilde\yb(k),\,\tilde\yb(j)$. Let $\tilde \Sigma$ denote the covariance matrix of $\tilde\yb$. Since $
\tilde{\Sigma}$ is a bounded operator in $\ell^2$, it is a known
fact \cite[Section 26]{Akhiezer-G-61} that  $
\tilde\sigma(k,j)\rightarrow 0$ as $|k-j|\rightarrow \infty$ so in a
sense the idiosyncratic component $\tilde\yb$ models only {\em short
range interactions} among the agents, which are decaying with
distance. Agents which are far away from each other are not affected by
mutual influence.
\item {\bf Factor loadings and long range influence.}
Since $\E [\hat{\yb}(k)\hat{\yb}(j)]= \sum_{i=1}^n f_i(k)f_i(j)$ and  the
elements of the column vectors  $f_i  \in \Rbb^{\infty}$ do not
decay with distance, the products $f_i (k)f_i(j)$ do not vanish when
$|k-j|\rightarrow \infty$. Hence     the factor loadings describe
``long range'' correlation between the factor components and the
$\hat{\yb}$ component of $\yb$ can be interpreted as variables
produced by  {\em long range interaction} among agents.
\end{enumerate}
In dynamic GFA models the components of the infinite vector process $\hat\yb(t)$ move in time like a rigidly connected set of points in space. For this reason $\{\hat\yb(t)\}$ is called the {\em flocking component} of the process $\{\yb(t )\}$ \cite{Bottegal-P-15}.

\section{Dynamic State-Space DGFA models}
Let us  introduce the time variable $t\in \Zbb$ and denote  now  by\footnote{ We warn the reader that in the dynamic setting the notation $\yb(t)$ denotes an infinite random vector and must not be confused with the notation of the static setting where $\yb(k)$  denotes a scalar random variable.}
$\yb:= \{\yb(t),\, t\in \Zbb\}$ a zero-mean, stationary, vector process of infinite cross-sectional dimension so that at each time $t$ the random vector $\yb(t)$ has countably infinite  random components. 
We consider the following  finite-dimensional\footnote{ Of course, more general 
infinite-dimensional DGFA models are possible that are not included in our systems' class.} dynamic model  where $\yb(t)$ depends on  a vector of $n$  common factors, $\xb$, evolving according to a linear dynamics of the form \footnote{For consistency with standard system-engineering notations (see e.g. \cite{LPBook}), we have denoted the  factor loading (also called output or observation) matrix by $C$ instead of $F$ as in the Factor Analysis literature.
}
\begin{equation}\label{Modclass-inf}
\left\{
\begin{array}{l}\xb(t+1) = A \xb(t) +  \vb(t) \\
\yb(t) = C \xb(t) +  \wb(t) \,.
\end{array}
\right.
\end{equation}

The study of GDFA's \eqref{Modclass-inf} will be undertaken by considering sequences of truncated models   of  increasing cross-sectional dimension $N$, each describing  the subvector   $\yb_N$  made of the first $N$ components of   the original output vector $\yb$. 
More precisely, we consider  a class
of truncated models of the form
\begin{equation}\label{Modclass}
	\left\{
	\begin{array}{l}\xb(t+1) = A \xb(t) +  \vb(t) \\
		\yb_N(t) = C_N \xb(t) +  \wb_N(t) \,,
	\end{array}
\right.
\end{equation}	
where  the state dimension $n$ of the  model  
	(\ref{Modclass-inf}) is fixed  (and therefore does not grow
	with the output dimension $N$). Each output matrix  
	$ C_{N} \in \, \Rbb^{N\times n}$ is the top submatrix of $C$ of dimension $N\times n$  so that  $C_N$ 
	has the nested structure
	$$
	C_{N +k}= \bmat C_N\tp & \; \tilde C_k\tp \emat \tp \,,
	$$
	and the noise vectors $\wb_N(t)$ have a similar nested structure. Notice that, by these assumptions  each $\yb_N(t)$ is a N-vector stationary process.
	
	We assume that
\begin{enumerate}
	\item\label{primaass}
	The  $n$-dimensional latent factor $\xb\equiv \{\xb(t); t\in \Zbb\}$   follows a stationary Markov  evolution
	described by the first of equation in (\ref{Modclass-inf}), where $A\in \Rbb^{n\times n}$ is  an asymptotically stable matrix (all its eigenvalues have modulus strictly less than one) and   $\vb(t)$  a white noise process of dimension $n$ whose covariance is denoted by $Q$.
	Hence  the steady-state variance of  $\xb(t)$ is the unique solution of the Stein (discrete-time Lyapunov) equation $\Sigma =A\Sigma A\tp+ Q$. 
	
	\item 
	The infinite-dimensional  white noise vector $\{\wb(t)\}$ is not assumed to have uncorrelated components as in standard Factor Analysis models  and (without loss of generality) is assumed to be uncorrelated with $\{\vb(t)\}$  at all times.
\end{enumerate}

	The following  assumptions regarding the asymptotic behaviour for $N\to \infty$ of  the sequence of models \eqref{Modclass} will be made: 
	\begin{enumerate}\setcounter{enumi}{2}
	\item \label{Ass2}
	$C$ is an $\infty \times n$  matrix with {\em strongly linearly independent columns}. 
	This  means that 
	\beq\label{slic}
	\lim_{N\rightarrow\infty}\lambda_{\min}[C_N\tp C_N]=+\infty,
	\eeq
	where $\lambda_{\min}[\cdot]$ denotes the smallest eigenvalue, see the appendix for a discussion and for more details on this.	
	
	\item \label{Ass3} The noise $\wb(0)$ is an idiosyncratic sequence (with respect to the cross sectional dimension).
	\end{enumerate}

	These conditions can easily be shown to be equivalent to conditions (7) and (8) in \cite{Stock-W-011}. 
	Since $\wb_N(t)$ is a stationary white noise process (in~$t$) for all $N$,
	it is easy to check that Assumption \ref{Ass3} implies that for any given $t$, $\wb(t)$ is also idiosyncratic.\\
	To avoid technicalities, we also assume that 
	\begin{enumerate} \setcounter{enumi}{4}
		\item \label{Ass4} The model is wide-sense stationary and minimal  so that  the pair $(A,Q)$, where $Q$ is the covariance of the white noise $\vb$, is reachable.
		
		\item \label{Ass5} The covariance $R_N$ of the output noise $\wb_N(t)$ is positive definite, i.e. $R_N>0,\,\forall N$.
	\end{enumerate}

 As observed by \cite{Deistler-2010}, minimality of the model (\ref{Modclass-inf}) does not guarantee that the state is a  {\em minimal static factor} of the observed process.
 
 \section{The Static case: latent variable estimation}\label{StochReal}

Are these models realistic and well posed and if so, how can we use them to compute estimates of  the latent vector $\xb$ based on observations of $\yb$ (and knowledge of the parameters)? In order to  get a hint on how to answer  this basic question we shall  look  at the following simple but  illuminating example.

\begin{example}[Estimation by averaging]\label{Ex1}
{\em Let ${\mathbf 1}\!\!1$ be an infinite column  vector of $1$'s and
let $\xb $ be a scalar random  variable uncorrelated with  $
\wb$, an infinite-dimensional vector whose components form a zero-mean  weakly stationary    sequence having finite variance.
Consider the static GFA model
\begin{equation*}
\yb = {\mathbf 1}\!\!1 \xb + \wb\,.
\end{equation*}
 It is easy to show that    $\lim_{n\rightarrow\infty} a_n^{\top} 
\wb = 0$ for any averaging sequence so that $\wb$ is idiosyncratic. In particular, for the AS of Example 1, we have $L^2-\lim_{n \to \infty}\, \frac{1}{n}\, \sum_{k=1}^n \yb(k) = \xb\,
$  and  we can recover the latent factor  by averaging.} \hfill$\Box$
\end{example}
In this example we can  asymptotically   estimate {\em without error} the factor by averaging. This method exploits the two basic properties of  GFA models, in particular idiosyncrasy of the noise, by passing to the limit in $N$. But how general  is this procedure? Below we shall consider a general static model and  estimate $\xb$  by the standard linear Bayes  rule. The idea  is to show that Bayes rule  provides indeed a way to construct averaging sequences.\\
 Consider the sequence  of   static finite-dimensional linear models 
\beq\label{staticGFA}
\yb_N = C_N \xb +  \wb_N \, \qquad N=1,2,3, \ldots
\eeq 
where  $\xb$  is a fixed random vector (not depending on $N$) and $\wb_N$ are zero-mean  random vectors uncorrelated with $\xb$. Without loss of generality  the variance matrix of $\xb$ is normalized to the identity. The question is how to  estimate (or reconstruct) the hidden variable $\xb$ in the representation (\ref{staticGFA})  based on an infinite string of observations.\\
Let $\tilde{\Sigma}_N:=\E[ \wb_N  \wb_N\tp]$ so that, 
$$
\Sigma_N= C_NC_N^{\top}+ \tilde{\Sigma}_N
$$
\begin{proposition}\label{prop2}
Consider  a class
of truncated models   of the form  \eqref{staticGFA}, where the sequences $C_N$ and $\tilde{\Sigma}_N$ are given, and assume that \eqref{slic}   holds and that $\wb_N$ converges to an idiosyncratic process. Then,  for $N\to \infty$,
the optimal Bayes estimate $\hat \xb_N := \hat{\E}[{\xb} \mid \yb_N]$ converges in mean square to the actual vector $\xb$ which can therefore be reconstructed with arbitrary precision provided that 
$N$ is sufficiently large.
\end{proposition}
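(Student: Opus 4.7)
The plan is to write the linear Bayes estimate explicitly and then show, via the matrix inversion lemma, that its error covariance collapses to the zero matrix as $N\to\infty$.

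Since $\xb$ has identity covariance and is uncorrelated with $\wb_N$, the linear minimum-variance estimate of $\xb$ given $\yb_N$ is
\[
\hat{\xb}_N=C_N\tp\Sigma_N^{-1}\yb_N, \qquad \Sigma_N=C_NC_N\tp+\tilde\Sigma_N,
\]
and its error covariance is
\[
P_N=\E[(\xb-\hat\xb_N)(\xb-\hat\xb_N)\tp]=I-C_N\tp\Sigma_N^{-1}C_N.
\]
The heart of the argument is to rewrite $P_N$ in a form in which the two standing assumptions (strong linear independence of the columns of $C$ and idiosyncrasy of $\wb$) appear transparently. Applying the Woodbury identity to $\Sigma_N^{-1}=(C_NC_N\tp+\tilde\Sigma_N)^{-1}$ and simplifying, one obtains the compact expression
\[
P_N=\bigl(I+M_N\bigr)^{-1},\qquad M_N:=C_N\tp\,\tilde\Sigma_N^{-1}\,C_N,
\]
so that everything reduces to showing that the smallest eigenvalue of $M_N$ diverges.

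Next I would exploit the idiosyncrasy hypothesis: by the characterization of idiosyncratic sequences recalled just after the proposition of the previous section, the covariances $\tilde\Sigma_N$ are uniformly bounded in operator norm, $\|\tilde\Sigma_N\|\le \bar M$, and hence $\tilde\Sigma_N^{-1}\succeq \bar M^{-1}I_N$. This gives the key matrix inequality
\[
M_N \;\succeq\; \bar M^{-1}\,C_N\tp C_N,
\]
so that $\lambda_{\min}(M_N)\ge \bar M^{-1}\lambda_{\min}(C_N\tp C_N)$. Assumption \eqref{slic} (strong linear independence of the factor loadings) then forces $\lambda_{\min}(M_N)\to\infty$, whence $P_N=(I+M_N)^{-1}\to 0$ in operator norm. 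Since the error covariance converges to zero, each component of $\xb-\hat\xb_N$ converges to zero in mean square, which is exactly the claim.

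The only delicate point is the interpretation of the hypothesis that "$\wb_N$ converges to an idiosyncratic process": one has to argue that this does deliver the uniform operator bound on the covariances $\tilde\Sigma_N$, because without this uniformity the inequality $\tilde\Sigma_N^{-1}\succeq\bar M^{-1}I_N$ fails and $M_N$ may not diverge fast enough. Everything else is a purely algebraic manipulation (Woodbury plus monotonicity of $X\mapsto (I+X)^{-1}$). Once that hypothesis is read as "$\wb_N$ is the top truncation of an idiosyncratic infinite sequence $\wb$", the bound is immediate from the proposition characterizing idiosyncratic sequences via bounded variance operators, and the proof closes in a couple of lines.
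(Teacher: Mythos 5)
Your proof is correct and follows essentially the same route as the paper: both express the error covariance as $(I+C_N\tp\tilde\Sigma_N^{-1}C_N)^{-1}$, bound it using the uniform operator bound $\tilde\Sigma_N\leq\alpha I_N$ coming from idiosyncrasy, and invoke \eqref{slic} to make $\lambda_{\min}[C_N\tp C_N]$ diverge. The only cosmetic difference is that you reach the error-covariance formula via Woodbury applied to $I-C_N\tp\Sigma_N^{-1}C_N$, whereas the paper quotes the projection formula directly in its information form; your closing remark about reading the hypothesis as a uniform bound on $\|\tilde\Sigma_N\|$ is exactly how the paper uses it.
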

\proof
The classical orthogonal projection formula yields
\begin{align*}
\hat \xb_N := \hat{\E}[{\xb} \mid \yb_N] &= 
(I+C_N^{\top}\tilde{\Sigma}_N^{-1} C_N)^{-1}\,
C_N^{\top}\tilde{\Sigma}_N^{-1}\,\yb_N \\
&=
C\tp_N(C_NC^{\top}_N+\tilde{\Sigma}_N)^{-1}\,\yb_N.
\end{align*}
The covariance of the estimation error ${\boldsymbol{\varepsilon}}_N:=\xb-\hat\xb_N$  is given by
\beq
\E[{\boldsymbol{\varepsilon}}_N{\boldsymbol{\varepsilon}}_N\tp]=(I+C_N^{\top}\tilde{\Sigma}_N^{-1} C_N)^{-1}
\eeq
because the estimation error ${\boldsymbol{\varepsilon}}_N$ and the estimate $\hat\xb_N$ are orthogonal.
Since $\tilde{\Sigma}_N\leq \alpha I_N,$ we have $\tilde{\Sigma}_N^{-1}\geq \alpha^{-1} I_N,$ so that
\beq\label{eq12}
\E[{\boldsymbol{\varepsilon}}_N{\boldsymbol{\varepsilon}}_N\tp]\leq (I+\alpha^{-1}C_N^{\top} C_N)^{-1}\leq \alpha(C_N^{\top} C_N)^{-1}
\eeq
and since the columns of $C_N$ are strongly linearly independent, Equation (\ref{slic}) holds so that
$$
\lim_{N\rightarrow\infty}\E[{\boldsymbol{\varepsilon}}_N{\boldsymbol{\varepsilon}}_N\tp]=0.
$$
In other words, the estimation error of the hidden variables converges to zero in mean square.
\qed

Notice that for simplicity, we have assumed that the sequence  $\tilde{\Sigma}_N$ is known but,
with an argument similar to that in \cite{Doz-Giannone-Reichlin-11} we may 
show that the same result holds also when this is not the case.

Since the  finite-data innovation (i.e., the estimation error $ \eb_N:= \yb_N-C_N\hat{\xb}_N$)  is orthogonal to $\hat{\xb}_N$, one also has the orthogonal (innovation) representation
$$
\yb_N=C_N\hat{\xb}_N+ \eb_N
$$
and, since under the stated assumptions on the model \eqref{staticGFA}, $\hat\xb_N \to  \xb$ as $N\to \infty$,  
we have the following result.
 \begin{proposition}
Under the assumptions of Proposition \ref{prop2} and assuming also that $\tilde{\Sigma}_N$ is uniformly coercive\footnote{This means that there exists $c>0$ independent of $N$ such that $\tilde{\Sigma}_N\geq c I_N$. This assumption is only made to avoid technical issues and might be weakened.} and that  $C_N$ is uniformly bounded, \footnote{This means  that $\exists \beta$ independent of $N$ such that $\max_{i,j}|[C_N]_{ij}|\leq \beta$, where 
$[C_N]_{ij}$ is the entry in row $i$ and column $j$ of $C_N$.}
any model \eqref{staticGFA} tends, for $N\to \infty$,  to become an innovation model. In particular, 
each entry of the finite-data innovation 
$\eb_N$ tends, in mean-square sense, to the corresponding entry of $\wb_N$ and the limit innovation process tends to be idiosyncratic.
\end{proposition}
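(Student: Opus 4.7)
The plan is to exploit the identity $\eb_N = C_N \boldsymbol{\varepsilon}_N + \wb_N$ (which follows from $\yb_N = C_N\xb + \wb_N$) and reduce the claim to showing that every entry of $C_N\boldsymbol{\varepsilon}_N$ tends to zero in mean square. Once this is done, the second part of the statement will fall out almost for free from the componentwise identification of the limit with $\wb$.

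First I would fix an index $k$ and observe that, by the nested structure of the output matrices assumed throughout the paper, the $k$-th row of $C_N$ coincides for every $N\geq k$ with the $k$-th row $c_k\tp$ of the infinite matrix $C$. Uniform boundedness of the entries of $C_N$ then gives $\|c_k\|^2\leq n\beta^2$, a constant independent of $N$. The $k$-th entry of $\eb_N-\wb_N$ is exactly $c_k\tp \boldsymbol{\varepsilon}_N$, so
\[
\E\bigl[(c_k\tp \boldsymbol{\varepsilon}_N)^2\bigr]=c_k\tp\E[\boldsymbol{\varepsilon}_N\boldsymbol{\varepsilon}_N\tp]c_k.
\]
Here I would invoke the error-covariance bound (\ref{eq12}) already proved in Proposition~\ref{prop2}, namely $\E[\boldsymbol{\varepsilon}_N\boldsymbol{\varepsilon}_N\tp]\leq \alpha(C_N\tp C_N)^{-1}$, to obtain
\[
\E\bigl[(c_k\tp \boldsymbol{\varepsilon}_N)^2\bigr]\leq \alpha\, c_k\tp(C_N\tp C_N)^{-1}c_k\leq \frac{\alpha\, n\beta^2}{\lambda_{\min}(C_N\tp C_N)}.
\]
Strong linear independence of the columns of $C$, i.e.~condition~(\ref{slic}), forces $\lambda_{\min}(C_N\tp C_N)\to\infty$, and the right-hand side vanishes as $N\to\infty$. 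This yields $[\eb_N]_k\to [\wb_N]_k=\wb(k)$ in mean square for every fixed $k$, which is the first assertion.

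For the second assertion, the componentwise mean-square limit of the sequence of finite-data innovations $\{\eb_N\}$ is the infinite-dimensional process $\wb$ itself. Because $\wb$ is idiosyncratic by Assumption~\ref{Ass3}, its covariance operator is bounded on $\ell^2$ by the characterization given in Proposition~1, and hence for every averaging sequence $\af=\{a_n\}$ one has $a_n\tp \wb\to 0$ in mean square. This is precisely the definition of an idiosyncratic limit innovation.

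The only delicate point I see is upgrading the componentwise mean-square convergence $[\eb_N]_k\to\wb(k)$ to a statement about the limit process as an infinite-dimensional object, so that it really is the process $\wb$ whose covariance operator controls the averaging limits. This is where the uniform coercivity $\tilde\Sigma_N\geq cI_N$ and the uniform boundedness of $C_N$ enter: together they give a uniform-in-$N$ control of the operator norm of $\Cov(\eb_N-\wb_N)$ and guarantee that the componentwise convergence lifts to the level of averaged linear functionals $a_n\tp(\eb_N-\wb_N)\to 0$. Apart from this bookkeeping, the rest of the proof is a direct combination of Proposition~\ref{prop2} with the idiosyncratic hypothesis on $\wb$.
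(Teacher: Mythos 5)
Your first claim (entrywise mean-square convergence of $\eb_N$ to $\wb_N$) is proved exactly as in the paper: same decomposition $\eb_N-\wb_N=C_N{\boldsymbol{\varepsilon}}_N$, same bound $\E[{\boldsymbol{\varepsilon}}_N{\boldsymbol{\varepsilon}}_N\tp]\leq\alpha(C_N\tp C_N)^{-1}$ from Proposition \ref{prop2}, then uniform boundedness of the rows of $C_N$ and \eqref{slic}. That part is fine.

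The second claim is where there is a genuine gap. You argue: the componentwise limit of $\eb_N$ is $\wb$, $\wb$ is idiosyncratic, hence the limit innovation is idiosyncratic. But the content of ``the limit innovation process tends to be idiosyncratic'' is a statement about the family $\{\eb_N\}$, namely that the covariances $\E[\eb_N\eb_N\tp]$ admit a bound of the form $M I_N$ uniformly in $N$ (this is the paper's operational characterization of idiosyncrasy), so that the innovation representation $\yb_N=C_N\hat\xb_N+\eb_N$ is asymptotically a legitimate GFA model. Entrywise convergence to an idiosyncratic process does not deliver this: for instance, adding to the last entry of $\eb_N$ a disturbance of variance $N$ leaves every fixed-index componentwise limit equal to $\wb(k)$ while $\|\E[\eb_N\eb_N\tp]\|\to\infty$, destroying idiosyncrasy of the model. (The one-step predictor of the paper's Theorem \ref{th:kalman_pred} is the cautionary cousin: its innovation covariance also converges entrywise to a finite matrix, yet it is not asymptotically idiosyncratic.) Your attempted patch in the last paragraph also misattributes the hypotheses: the needed uniform operator bound $\E[{\boldsymbol{\delta}}_N{\boldsymbol{\delta}}_N\tp]\leq\alpha C_N(C_N\tp C_N)^{-1}C_N\tp\leq\alpha I_N$ follows from $\tilde\Sigma_N\leq\alpha I_N$ (idiosyncrasy of $\wb$) alone, not from coercivity plus boundedness of $C_N$; indeed coercivity is precisely what the paper uses, in the remark following the proposition, to show that ${\boldsymbol{\delta}}_N$ does \emph{not} vanish in mean square as a vector, so it cannot be the ingredient that upgrades your componentwise convergence to a vector-level or averaged-functional statement of the kind $a_n\tp(\eb_N-\wb_N)\to 0$. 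The missing step is short and you essentially have its ingredients: from $\eb_N={\boldsymbol{\delta}}_N+\wb_N$ one gets $\E[\eb_N\eb_N\tp]\leq 2\tilde\Sigma_N+2\E[{\boldsymbol{\delta}}_N{\boldsymbol{\delta}}_N\tp]\leq 4\alpha I_N$ for every $N$, and this uniform bound, via the characterization of idiosyncratic sequences by uniformly bounded covariance operators, is what actually proves that the limit innovation is idiosyncratic; this is the route the paper takes, and note it requires neither coercivity nor uniform boundedness of $C_N$.
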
 
 \begin{proof}
Let ${\boldsymbol{\delta}}_N:=\eb_N-\wb_N=C_N{\boldsymbol{\varepsilon}}_N$. We have
\begin{align*}
\E[{\boldsymbol{\delta}}_N{\boldsymbol{\delta}}_N\tp] &=
C_N(I+C_N^{\top}\tilde{\Sigma}_N^{-1} C_N)^{-1}C_N\tp \\
&\leq 
\alpha C_N(C_N^{\top} C_N)^{-1}C_N\tp
\end{align*}
where the last inequality follows from the same argument that led to (\ref{eq12}).
Let $i\in\{1,2,\dots, N\}$ and consider the $i$-th component
$[{\boldsymbol{\delta}}_N]_i$ of ${\boldsymbol{\delta}}_N$.
Its covariance $\E[([{\boldsymbol{\delta}}_N]_i)^2]$ satisfies
$$
\E[([{\boldsymbol{\delta}}_N]_i)^2]\leq \frac{\alpha}{\sigma_{min}[C_N^{\top} C_N] } [C_N]_i  [C_N]_i^{\top},
$$
where $[C_N]_i$ is the $i$-th row of $C_N$.
Since by assumption $C_N$ is uniformly bounded, $ [C_N]_i  [C_N]_i^{\top}$ is also  uniformly bounded so that there exists
$\gamma$ independent of $N$ such that 
$$
\max_{i}\E[([{\boldsymbol{\delta}}_N]_i)^2]\leq \frac{\alpha\gamma}{\sigma_{min}[C_N^{\top} C_N] }\stackrel{N\rightarrow\infty}{\longrightarrow}0
$$
because $\sigma_{min}[C_N^{\top} C_N]$ diverges in view of the strongly linear independence assumption.
To see that $\eb_N$   tends to be idiosyncratic, notice that $\eb_N={\boldsymbol{\delta}}_N+\wb_N$
so that
\begin{align*}
\E[\eb_N \eb_N\tp] 
&\leq 2\tilde{\Sigma}_N +2 \E[{\boldsymbol{\delta}}_N{\boldsymbol{\delta}}_N\tp]\\
&\leq 2\alpha (I_N+C_N(C_N^{\top} C_N)^{-1}C_N\tp)\\
&\leq 4\alpha I_N
\end{align*}
which shows that the covariance  $\E[\eb_N \eb_N\tp]$ converges monotonically to a bounded operator in $\ell^2$. 
\qed
\end{proof}

{\bf Remarks}\\
1. The mean-square entry-wise convergence of $\eb_N$ to $\wb_N$
is a weak form of convergence that may be interpreted as convergence in a mixed mean-square and infinity-type norm. 
It is natural to ask whether a stronger convergence holds in which the entire vector $\eb_N$  converges to $\wb_N$ or, equivalently whether  ${\boldsymbol{\delta}}_N$ does  converge to zero in mean square.
This indeed is not the case. In fact, 
for $N$ sufficiently large, 
$$
I+C_N^{\top}\tilde{\Sigma}_N^{-1} C_N\leq I+c^{-1}C_N^{\top} C_N\leq 2c^{-1}C_N^{\top} C_N,
$$
where the last inequality depends, in view of (\ref{slic}),  on the fact that the smallest singular value of $C_N^{\top} C_N$ diverges.
Then, 
$$
\alpha C_N(C_N^{\top} C_N)^{-1}C_N\tp\geq \E[{\boldsymbol{\delta}}_N{\boldsymbol{\delta}}_N\tp]\geq
\frac{c}{2} C_N(C_N^{\top} C_N)^{-1}C_N\tp.
$$ 
Finally notice that $C_N(C_N^{\top} C_N)^{-1}C_N\tp$ is the matrix orthogonally projecting on the image of $C_N$, so that it has rank equal to $n$ and $n$ eigenvalues equal to $1$.
In conclusion, for any (large enough) $N$ the covariance matrix of 
${\boldsymbol{\delta}}_N$ is a matrix of rank $n$ whose $n$ largest eigenvalues are bounded above by $\alpha$ and are greater than $c/2$.\footnote{At the price of some technical complication in the derivation we could refine the lower bound to $c/(1+\epsilon)$ for any positive $\epsilon$.}
Therefore, $\eb_N$ does {\em not} tend to $\wb_N$ in mean square.\\
2. Notice that the assumption of Proposition \ref{prop2} that $C_N$ is uniformly bounded can be weakened but cannot be completely eliminated. In fact, it is easy to see that the result still holds, for example, for $C_N=[1\ 2\ \dots\ N]\tp$ but does not hold, for example, for $C_N=[1\ 2\ \dots\ 2^{N-1}]\tp$.
Indeed, in the former case,  
$\max_{i}\E[([{\boldsymbol{\delta}}_N]_i)^2]
\leq\frac{6N^2\alpha}{N(N+1)(2N+1)}\stackrel{N\rightarrow\infty}{\longrightarrow}0$, but in the latter case
$\max_{i}\E[([{\boldsymbol{\delta}}_N]_i)^2]
\geq\frac{3c 4^{N-1}}{2(4^N-1)}\stackrel{N\rightarrow\infty}{\longrightarrow}3c/8>0$.
\\
3. In general, the error process  $\eb_N$ does not need to have uncorrelated components and hence  this procedure can in principle be applied to GFA models and extended to   the dynamic case. The key question however is to check if $\eb_N$ {\em can converge  to an idiosyncratic sequence.}\\
In the static case the answer to this question is
affirmative as shown before. Interestingly and somehow surprisingly, this is not guaranteed to happen in the dynamic case.

\section{The Dynamic case: the Kalman Predictor}
Asymptotic factor  (actually factor space) estimation by linear Bayesian averaging
 can be generalized to Dynamic GFA models. This  is also implicitly hinted at in  \cite[p. 9]{Stock-W-011}.  For each finite $N$ one can estimate the latent variable $\xb(t)$ in the GDFA model \eqref{Modclass} by Kalman filtering, see  e.g. \cite{Kapetanios-M-09}.  The usual understanding of Kalman filtering  leads to compute  the one-step ahead estimate $\hat \xb_N(t | t-1)$ of the hidden variable $\xb(t)$ based on previous outputs up to time $t-1$. The estimator can obviously be implemented for finite truncations of the model, of increasing dimension $N$  and, assuming steady state, leads to the following sequence  of innovation models:
\beq \label{Predittore}
\left\{
\begin{array}{l}
\hat \xb_N(t+1|t)= A \hat \xb_N(t|t-1) + K_N \eb_N(t)\\    
\yb_N(t) = C_N\hat \xb_N(t|t-1) + \eb_N(t),              
\end{array}
\right.
\eeq
where the innovation
$$
\eb_N(t):=\yb_N(t)-C_N\hat{\xb}_N(t|t-1)
$$
has covariance
\beq\label{lambdan}
\Lambda_N=C_NP_NC_N^{\top}+R_N
\eeq
with $P_N$ being the  stabilizing solution of the Algebraic Riccati Equation (ARE)
\beq\label{ARE}
P_N=A[\,P_N-P_NC_N^{\top}(C_NP_NC_N^{\top} +R_N)^{-1}C_NP_N\,]A^{\top}+Q
\eeq
from which  one can compute  the {\em Kalman gain}, 
$$
K_N:=AP_NC_N^{\top}\Lambda_N^{-1}\,.
$$
The question is how the estimates behave for $N\to \infty$. In particular  the question is if for $N\to \infty$ the (stationary) innovation  representation  (\ref{Predittore}) of $\yb_N(t)$ is  a legitimate DGFA representation. Given the standing assumption on $C$, this will be  true   if and  only if  the limit innovation process  $\eb_N(t) $ is  idiosyncratic. Now the (steady-state) innovation covariance is given by (\ref{lambdan}) and since the original model is assumed  DGFA, by Assumption (\ref{Ass3})    the output noise covariance  $R_N:=\E [ \wb_N(t)\wb_N(t)^\top]$ tends for $N\to \infty$ to a bounded covariance operator $R$. The term $C_NP_NC_N^{\top}$ can be interpreted as a perturbation of $R_N$ and it looks like   such a perturbation tends to be {\em unbounded}. In fact, under the minimality (and hence by the reachability) assumption, for each finite $N$, $P_N>0$, and since $C_N$ has strongly linearly independent columns one may be led to conjecture that  $C_NP_NC_N^{\top}$ converges to an unbounded operator of finite rank $n$. This fact casts doubts on the model \eqref{Predittore} converging to   a legitimate DGFA model for $N\to \infty$. The argument can be made rigorous as stated in the following theorem.
\begin{theorem}\label{th:kalman_pred}
Consider  a class of truncated models of the form \eqref{Modclass} with Assumptions (\ref{primaass}) to (\ref{Ass5}).
For $N\to \infty$ the innovation process $\eb_N$ in  the steady-state innovation representation \eqref{Predittore}
 does not tend to an idiosyncratic process. Hence for $N\to \infty$ the innovation model \eqref{Predittore} does not tend to  a legitimate DGFA model.
\end{theorem}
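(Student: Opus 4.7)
The plan is to contradict the uniform-boundedness characterization of idiosyncrasy for the innovation covariance. Since
$$
\Lambda_N=C_NP_NC_N^{\top}+R_N,
$$
and $R_N$ is the top-left $N\times N$ block of the idiosyncratic output-noise covariance (so $\|R_N\|\le M$ for a constant $M$ independent of $N$ by Assumption~\ref{Ass3}), it is enough to show that $\|C_NP_NC_N^{\top}\|$ diverges with $N$. Observe that $C_NP_NC_N^{\top}$ has rank at most $n$; the strategy is to force at least one of its non-zero eigenvalues to explode by combining a uniform positive lower bound on $P_N$ with the strong linear independence of the columns of $C$.

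First I would establish $P_N\succeq Q$ uniformly in $N$. Writing the one-step prediction error as
$$
\xb(t+1)-\hat\xb_N(t+1|t)=A\bigl[\xb(t)-\hat\xb_N(t|t)\bigr]+\vb(t),
$$
and noting that $\vb(t)$ is orthogonal to the observation history $\{\yb_N(s),\,s\le t\}$ and hence to $\hat\xb_N(t|t)$, taking covariances yields
$$
P_N=A\Pi_NA^{\top}+Q\succeq Q,
$$
where $\Pi_N$ denotes the steady-state filter error covariance. Intuitively, the ``fresh'' process-noise increment $\vb(t)$ that enters between the last observation time $t$ and the target time $t+1$ cannot be removed by conditioning, no matter how many output components are available.

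Next, from $P_N\succeq Q$ and the cyclic property of the trace one gets
$$
\Tr(C_NP_NC_N^{\top})\ge \Tr(Q^{1/2}C_N^{\top}C_NQ^{1/2})\ge \lambda_{\min}[C_N^{\top}C_N]\,\Tr(Q),
$$
where the last step follows by writing $\Tr(Q^{1/2}C_N^{\top}C_NQ^{1/2})=\sum_i \|C_N Q^{1/2} e_i\|^2 \ge \lambda_{\min}[C_N^{\top}C_N]\sum_i\|Q^{1/2}e_i\|^2$. Since $\Tr(Q)>0$ (otherwise $Q=0$, which is excluded by the reachability part of Assumption~\ref{Ass4}) and $\lambda_{\min}[C_N^{\top}C_N]\to\infty$ by Assumption~\ref{Ass2}, the trace diverges. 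Using the rank bound $\rank(C_NP_NC_N^{\top})\le n$,
$$
\lambda_{\max}(C_NP_NC_N^{\top})\ge \frac{1}{n}\Tr(C_NP_NC_N^{\top})\longrightarrow \infty,
$$
so $\|\Lambda_N\|\to\infty$. By the characterization recalled in Section~\ref{Idiosync} (an idiosyncratic sequence has uniformly bounded covariance operator on $\ell^2$), the innovation $\eb_N$ cannot tend to an idiosyncratic process and the model~(\ref{Predittore}) cannot be a legitimate DGFA representation in the limit.

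The main obstacle is really the uniform lower bound $P_N\succeq Q$: it is precisely the unavoidable one-step-ahead process-noise residual that distinguishes the predictor from the pure filter and forces the incompatibility with DGFA structure. Once this is in place the remainder is elementary linear algebra, exploiting only the rank bound $\rank(C_NP_NC_N^{\top})\le n$ and the divergence of $\lambda_{\min}[C_N^{\top}C_N]$ dictated by the strong linear independence of the factor loadings.
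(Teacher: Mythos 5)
Your proof is correct, and it follows the same overall strategy as the paper: establish the uniform lower bound $P_N\succeq Q$ and then combine it with $\lambda_{\min}[C_N^{\top}C_N]\to\infty$ and the boundedness of $R_N$ to conclude that $\Lambda_N=C_NP_NC_N^{\top}+R_N$ acquires a diverging eigenvalue, contradicting idiosyncrasy. The differences are in how the two halves are executed. For the key bound you invoke the exact predictor--filter covariance identity $P_N=A\Pi_NA^{\top}+Q$, obtained from the orthogonality of the fresh noise $\vb(t)$ to the data $\yb_N^t$ (and, implicitly, to $\xb(t)$ by whiteness and causality, a point worth stating); the paper instead rewrites the ARE \eqref{ARE} in information form, $P_N=A\left(P_N^{-1}+C_N^{\top}R_N^{-1}C_N\right)^{-1}A^{\top}+Q$, which requires $P_N>0$ from reachability but yields more, namely the monotone convergence $P_N\to Q$ via \eqref{pintendea0}; that stronger fact is what powers Corollary \ref{CorPE} and the subsequent observation that $\tilde Q_N\to AQA^{\top}$, and your argument does not deliver it (though it is not needed for the theorem as stated). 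For the second half, the paper simply asserts that the rank-deficient perturbation $C_NP_NC_N^{\top}$ must have an eigenvalue tending to infinity, whereas you make this explicit and rigorous through the trace-over-rank bound $\lambda_{\max}(C_NP_NC_N^{\top})\ge\frac1n\Tr(C_NP_NC_N^{\top})\ge\frac1n\lambda_{\min}[C_N^{\top}C_N]\Tr(Q)$, with $\Tr(Q)>0$ by reachability; this is a clean way to close the gap the paper leaves implicit. In short: same skeleton, a more probabilistic route to $P_N\succeq Q$, a more explicit eigenvalue-divergence step, at the price of losing the limit $P_N\to Q$ that the paper reuses later.
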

\proof  
By minimality of the model \eqref{Modclass}, $(A,Q)$ is reachable and hence the stabilizing solution $P_N$ of the ARE \eqref{ARE} is positive definite for each fixed $N$.
Then we can rewrite the ARE \eqref{ARE} as
\beq\label{ARE1}
P_N=A\left( \,P_N^{-1}+C_N^{\top}R_N^{-1}C_N\, \right)^{-1}A^{\top}+Q
\eeq
and hence, for each fixed $N$, we have $P_N \geq Q$.
Since in the original GDFA model $ \wb$ is idiosyncratic, the noise covariances $R_N$ are uniformly bounded, i.e.  there exists $\alpha$ (independent of $N$) such that $R_N\leq \alpha I$.
Therefore, 
$$
P_N^{-1}+C_N^{\top}R_N^{-1}C_N\geq \frac{1}{\alpha} C_N^{\top} C_N \geq \frac{\lambda_{min}[C_N^{\top} C_N]}{\alpha}I
$$ 
and, as a consequence, 
\beq\label{pintendea0}
\left( \,P_N^{-1}+C_N^{\top}R_N^{-1}C_N\, \right)^{-1}\leq \frac{\alpha I}{\lambda_{min}[C_N^{\top} C_N]}\stackrel{N\rightarrow\infty}{\longrightarrow}0.
\eeq
This inequality  together with \eqref{ARE1} implies that $P_N$ converges monotonically to $Q$.
Hence, the  perturbation  term  $C_NP_NC_N^{\top}$ of $R_N$ in \eqref{lambdan}   must have at least one eigenvalue tending to infinity (actually as many as the rank of $Q$) and must therefore  tend to an unbounded operator so that  $\eb_N(t)$ is \textit{not} an idiosyncratic process. In conclusion, the innovation  model \eqref{Predittore} does \textit{not} satisfy the conditions of a GDFA model.
\qed

Since, as shown in the previous proof, $P_N$ converges to $Q$, which is not the zero matrix, we have the following corollary.
\begin{corollary}\label{CorPE}
Under the  assumptions of Theorem \ref{th:kalman_pred}, the steady state prediction error of the state does not converge to zero (in mean square) as $N$ diverges.
In particular, the one step ahead predictor of the {\em common component vector} $\chib_N(t):= C_N\xb(t)$ does not converge neither to $C\xb(t)$ nor to the measured signal $\yb(t)$, as $N\to \infty$. 

\end{corollary}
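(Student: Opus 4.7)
The plan is to piggyback on the key intermediate fact established inside the proof of Theorem~\ref{th:kalman_pred}, namely that the stabilizing ARE solution satisfies $P_N\geq Q$ for every $N$ and decreases monotonically to $Q$ as $N\to\infty$. Since Assumption~\ref{Ass4} requires $(A,Q)$ to be reachable, $Q$ cannot be the zero matrix; in particular $\Tr(Q)>0$. This is essentially the only ingredient needed.

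The first assertion is then immediate. By standard Kalman theory, the steady-state covariance of the one-step prediction error $\tilde\xb_N(t|t-1):=\xb(t)-\hat\xb_N(t|t-1)$ coincides with $P_N$, so its mean-square norm satisfies $\E\,\tilde\xb_N(t|t-1)\tp\tilde\xb_N(t|t-1)=\Tr(P_N)\geq\Tr(Q)>0$ uniformly in~$N$. Hence $\tilde\xb_N(t|t-1)$ does not vanish in mean square as $N\to\infty$.

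For the second assertion, the cleanest way to make the failure of convergence precise is to argue entry by entry, in analogy with the mean-square entry-wise discussion following Proposition~\ref{prop2}. Fix $i$, denote by $[C]_i$ the $i$-th row of $C$, and observe that
\begin{equation*}
[C_N\xb(t)-C_N\hat\xb_N(t|t-1)]_i=[C]_i\,\tilde\xb_N(t|t-1),
\end{equation*}
whose variance $[C]_iP_N[C]_i\tp$ converges to $[C]_iQ[C]_i\tp$. Strong linear independence of the columns of $C$ forces infinitely many rows $[C]_i$ to yield a strictly positive limit, so entry-wise convergence of $C_N\hat\xb_N(t|t-1)$ to $C\xb(t)$ fails. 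For the comparison with $\yb(t)$, the $i$-th entry of the discrepancy is the $i$-th innovation, whose variance $[\Lambda_N]_{ii}=[C_N]_iP_N[C_N]_i\tp+[R_N]_{ii}$ is bounded below by $[R_N]_{ii}>0$ thanks to Assumption~\ref{Ass5}, once again uniformly in $N$.

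The only delicate decision is specifying the mode of convergence: since the vectors involved live in spaces whose dimension grows with $N$, entry-wise mean-square non-convergence is the natural and sharpest reading of the statement, and is what I would adopt. A stronger operator-norm version is also available, since the proof of Theorem~\ref{th:kalman_pred} already implies $\lambda_{\max}(C_NP_NC_N\tp)\to\infty$, but this is more than the corollary really needs.
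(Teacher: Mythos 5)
Your proposal is correct, and its first half is exactly the paper's argument: both rest on the fact, lifted from the proof of Theorem~\ref{th:kalman_pred}, that the stabilizing solution satisfies $P_N\geq Q$ and $P_N\to Q$, with $Q\neq 0$ by reachability of $(A,Q)$, so the steady-state state prediction error covariance cannot vanish. For the second half you take a genuinely different route. The paper stays with the full error covariance: since $P_N\geq Q$ and the columns of $C$ are strongly linearly independent, $C_NP_NC_N\tp$ (the covariance of $C_N\xb(t)-C_N\hat\xb_N(t\mid t-1)$) has at least one eigenvalue tending to infinity, so the error covariance diverges along some direction, and \emph{a fortiori} the same holds for $C_NP_NC_N\tp+R_N$, the covariance of $\yb_N(t)-\hat\chib_N(t\mid t-1)$; non-convergence is thus read as unboundedness of the error. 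You instead argue entry-wise, lower-bounding the variance of a fixed error component by $[C]_iQ[C]_i\tp$ (resp.\ by $[R_N]_{ii}$, which for fixed $i$ is independent of $N$ by the nested structure of the noise and positive by Assumption~\ref{Ass5}). This is more elementary, and since one fixed entry with variance bounded away from zero already rules out mean-square convergence of the whole vector, it yields the corollary; it even gives the sharper fact that \emph{every} entry of $\yb_N(t)-\hat\chib_N(t\mid t-1)$ fails to converge, whereas the paper's eigenvalue argument gives the stronger divergence (unboundedness) statement that your bounded lower estimates do not. The one step you should not leave implicit is the claim that strong linear independence forces (infinitely many, though one suffices) rows with $[C]_iQ[C]_i\tp>0$: since reachability of $(A,Q)$ does not imply $Q>0$, this needs the short argument that for a unit vector $u$ in the range of $Q$ one has $\|C_Nu\|^2\geq\lambda_{\min}[C_N\tp C_N]\to\infty$, so infinitely many rows satisfy $[C]_iu\neq 0$, and any such row has $[C]_i\tp\notin\Ker Q$, hence $[C]_iQ[C]_i\tp=\|Q^{1/2}[C]_i\tp\|^2>0$. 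With that two-line justification added, your proof is complete.
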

 \proof
The one step ahead predictor of $\chib_N(t)$,  $\hat \chib(t\mid t-1) :=C_N \hat\xb_N(t\mid t-1)$ is just the  one step ahead predictor of $\yb_N(t)$ and one can write
$$
\yb_N(t)=\hat \chib_N(t\mid t-1) + \eb_N(t)
$$
where  $\eb_N$ is the output prediction error i.e. the innovation. Since, as we have shown,  the 
 covariance matrix   of the prediction error $\eb_N$ becomes  {\em unbounded} as $N\to \infty$,  the predictor $\hat \chib_N(t\mid t-1)$ cannot be  consistent in mean square. In fact, as we have already seen, the term   $C_NP_NC_N^{\top}$, namely the steady state  covariance matrix
of the prediction error $C_N\xb(t)-C_N \hat\xb_N(t\mid t-1)$  must have at least one eigenvalue tending to infinity and there must then be at least one direction along which the error covariance diverges in mean square. 
This is {\em a fortiori} true for the covariance matrix  $C_NP_NC_N^{\top} +R_N$ of the difference
$\yb_N(t)-\hat \chib_N(t\mid t-1)$.\hfill 
$\Box$

Notice that while $\eb_N(t)$ is \textit{not} idiosyncratic, the ``model noise" process $\tilde{\vb}_N(t):=K_N \eb_N(t)$
does converge in mean-square to a finite covariance noise $\tilde{\vb}(t)$.
In fact its covariance is $\tilde{Q}_N:= K_N  \Lambda_N K_N\tp$ and by rearranging \eqref{ARE} one immediately sees that
$$
\tilde{Q}_N=AP_NA\tp -P_N +Q
$$
which clearly converges to $AQA\tp$ because $P_N\to Q$. Therefore, the (steady-state) covariance of
$\hat\xb_N(t|t-1)$ converges to the unique solution of the Stein equation $$\Sigma =A\Sigma A\tp+ AQA\tp.$$

\begin{remark}
{\em That  the innovation model \eqref{Predittore} cannot in the limit be interpreted as  a valid GDFA model has   important consequences. Among them it 
suggests that 
the application of standard subspace identification methods to the identification of GDFA models 
(see e.g.  \cite{Marcellino-17}), while effective in deriving a good generative model for the observed data,
may lack the capability of extracting the very GDFA feature of the model itself.
In fact, 
these methods are based (typically, via   standard canonical correlation analysis of the future onto the strict past of the process $\yb(t)$)  on the  construction of  a particular basis on the predictor space,   \cite{LPBook}.
Indeed, the results of the following section seem to suggest that considering the pure filter estimator in place of the predictor, may be advantageous.
} 
\end{remark}

\begin{remark}
{\em
We may attempt a frequency-domain analysis of our result.
Obviously  the matrix transfer function of each model of the type  \eqref{Predittore}   is the unique\footnote{Up to uninteresting multiplication on the right
side by an orthogonal matrix.} outer (i.e. stable and minimum-phase)  spectral factor of the spectral density of the truncated process $\yb_N(t)$. For $N\to \infty$   these spectral factors (which are $N\times N$ rational outer matrices of full rank $N$) have  zeros inside the open unit circle. Hence, the  limit spectral factor will have all of its zeros  (at most) in the closed unit circle. Therefore the limit spectral factor could also be  called outer, or (weakly) minimum phase. However, since the innovation process corresponding to this factor is not idiosyncratic, the limit model cannot be a DGFA model. This could be stated by saying that {\em there cannot exist prediction-error innovation models in the  class of GDFA descriptions}.
} 
\end{remark}

\section{The Dynamic case: the pure filter estimator}
 
Since the output noise covariance of the one step-ahead  innovation model is in a sense ``too big" as it has an unbounded component, one may wonder if to obtain a viable GDFA model, one could choose  the (pure) filter  estimate $\E [ \xb(t) \mid  \yb^t]$  instead of  the one-step ahead state predictor. It is in fact well-known that this estimate leads to a model with smaller state error variance.

To verify this conjecture,  consider the steady state estimate $\hat \xb_N(t) :=\E [ \xb(t) \mid  \yb_N^t]  $ given the infinite past, which satisfies the recursion
$$
 \hat \xb_N(t+1)= \E [ \xb(t+1) \mid  \yb_N^{t}] + \E [ \xb(t+1) \mid  \eb_N(t+1)\,] 
$$
which for a model with uncorrelated state and output noises, yields
$$
\hat \xb_N(t+1)= A \hat \xb_N(t)+ L_N \eb_N(t+1)\, 
$$ 
where $L_N:=P_N C_N^{\top}\Lambda_N^{-1}$. This yields the  {\em filtered innovation model}
\beq \label{Innov2}
\left\{
\begin{array}{ll}
 \hat {\xb}_N(t+1)&=A \hat{\xb}_N(t) +L_N \eb_N(t+1) \\
\yb_N(t) & = C_N\hat{\xb}_N(t) + \hat\eb_N(t)   
\end{array}
\right.
\eeq
where (note the hatted symbol) $\hat \eb_N:= \yb_N(t) -C_N\hat{\xb}_N(t)$ is the {\em filter innovation} which is a white noise process. In fact $\eb_N$ and $\hat \eb_N$ are related by the formula
\beq \label{InnInnov2}
\eb_N(t)= [\, I- C_NL_N\,]^{-1} \hat \eb_N(t)
\eeq
which follows from    \eqref{Innov2} as
\bea\nonumber
\hat \eb_N(t)&:= &\yb_N(t) -C_N (\hat \xb_N(t\mid t-1) +L_N \eb_N(t))\\
\nonumber
&=& \eb_N(t) - C_NL_N\eb_N(t)\,.
\eea
This agrees with the fact  that the noise term in the second equation of \eqref{Innov2} is uncorrelated with $ \yb_N^t$ and hence with $\hat \xb_N(t)$.   The variance of $\hat\eb_N(t) $ has the representation
\begin{align*}
\hat \Lambda_N &:= \E[ \hat\eb_N(t)\hat\eb_N(t)^\top]\\
&=  [\, I- C_NP_N C_N^{\top}\Lambda_N^{-1}\,] \Lambda_N  [\, I- C_NP_N C_N^{\top}\Lambda_N^{-1}\,]^{\top}  \\
&= R_N\Lambda_N^{-1}R_N\,.
\end{align*}
\begin{theorem}\label{th:kalman_filt}
Consider  a class of truncated models of the form \eqref{Modclass} with  Assumptions (\ref{primaass}) to (\ref{Ass5}).
Then, the state and output noises in the associated model \eqref{Innov2} are  uncorrelated and the output noise variance $\hat \Lambda_N$ tends to a bounded operator as $N\to \infty$. Therefore \eqref{Innov2} converges to a legitimate DGFA representation of the process $\yb$.
\end{theorem}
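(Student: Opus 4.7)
The theorem bundles three claims: (a) uncorrelatedness of state and output noises in the filter innovation model \eqref{Innov2}, (b) uniform boundedness of the output noise covariance $\hat\Lambda_N$ as $N\to\infty$, and (c) convergence of \eqref{Innov2} to a DGFA representation of $\yb$. The first two are short computations that rest on $\hat\eb_N(t)=(I-C_NL_N)\eb_N(t)$ from \eqref{InnInnov2} and on the formula $\hat\Lambda_N=R_N\Lambda_N^{-1}R_N$ that is recorded just before the theorem; the third bootstraps these facts using Proposition 1, Assumption \ref{Ass2}, and a filter-error bound mirroring that in the proof of Theorem \ref{th:kalman_pred}.

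For (a), whiteness of the predictor innovation $\eb_N$ gives
\[
\E\bigl[L_N\eb_N(t+1)\,\hat\eb_N(t)^\top\bigr]
= L_N\,\E\bigl[\eb_N(t+1)\eb_N(t)^\top\bigr]\,(I-C_NL_N)^\top = 0,
\]
i.e.\ the noise driving the state from $\hat\xb_N(t)$ to $\hat\xb_N(t+1)$ is orthogonal to the output noise at the same time $t$. For (b), $P_N\geq 0$ yields $\Lambda_N=C_NP_NC_N^\top+R_N\geq R_N$; since $R_N>0$ by Assumption \ref{Ass5}, operator monotonicity of the inverse on the positive cone gives $\Lambda_N^{-1}\leq R_N^{-1}$, whence
\[
\hat\Lambda_N = R_N\Lambda_N^{-1}R_N \leq R_N R_N^{-1} R_N = R_N \leq \alpha I,
\]
the last inequality coming from the idiosyncrasy of $\wb$ (Assumption \ref{Ass3} combined with Proposition 1). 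Proposition 1 applied in the opposite direction then certifies that the cross-sectional limit of $\hat\eb_N(t)$ is itself idiosyncratic.

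To upgrade (a) and (b) to a legitimate DGFA representation in the sense of Definition \ref{def:AggrIdio}, one still needs strongly linearly independent factor loadings (immediate from Assumption \ref{Ass2}) and an idiosyncratic noise orthogonal to the factors. The orthogonality follows once one shows $\hat\xb_N(t)\to\xb(t)$ in mean square, since then $\hat\eb_N(t)\to\wb(t)$ and the required orthogonality $\wb(t)\perp\xb(t)$ is built into the original DGFA model. The matrix inversion lemma rewrites the filter error covariance as $P_N^{\mathrm{filt}}=(P_N^{-1}+C_N^\top R_N^{-1}C_N)^{-1}$, which is exactly the quantity bounded in \eqref{pintendea0}; by strong linear independence of the columns of $C$ it vanishes as $N\to\infty$. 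I expect this last passage to be the main subtlety: it hinges on the same combination of Assumptions \ref{Ass2} and \ref{Ass5} that drove the static Proposition \ref{prop2}, now transplanted to the dynamic filter via the Schur-complement identity; the rest of the argument is routine algebra.
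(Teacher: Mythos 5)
Your core argument coincides with the paper's own proof: uncorrelatedness of the two noises is read off from $\hat\eb_N(t)=(I-C_NL_N)\eb_N(t)$ together with whiteness of the predictor innovation, and boundedness of $\hat\Lambda_N=R_N\Lambda_N^{-1}R_N$ follows from $\Lambda_N=C_NP_NC_N^\top+R_N\geq R_N$ and $R_N\leq\alpha I$. In fact your chain $\hat\Lambda_N\leq R_N\leq\alpha I$ (inverse antitonicity plus congruence by $R_N$) is slightly tighter than the paper's wording, since it does not require the eigenvalues of $\Lambda_N$ to be bounded away from zero uniformly in $N$; combined with Proposition 1 and Assumption (\ref{Ass2}) this already delivers the ``legitimate DGFA'' conclusion exactly as the paper intends.

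The last paragraph of your proposal, however, contains a false step: from $\Pi_N=(P_N^{-1}+C_N^\top R_N^{-1}C_N)^{-1}\to 0$ you infer that $\hat\eb_N(t)\to\wb_N(t)$ in mean square. The difference $\hat{\boldsymbol{\delta}}_N(t)=\hat\eb_N(t)-\wb_N(t)=C_N\bigl(\xb(t)-\hat\xb_N(t)\bigr)$ has covariance $C_N\Pi_NC_N^\top$, and because the columns of $C_N$ blow up at exactly the rate at which $\Pi_N$ vanishes, this is a rank-$n$ matrix whose nonzero eigenvalues stay bounded away from zero when $R_N$ is uniformly coercive; the paper's Remark \ref{remark4} (and Remark 1 in the static case) states precisely that only entry-wise mean-square convergence holds, and only under the extra assumption that $C_N$ is uniformly bounded. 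So the route ``$\hat\xb_N\to\xb$, hence $\hat\eb_N\to\wb$, hence orthogonality to the factors'' does not go through. Fortunately it is also unnecessary for the theorem as stated: what qualifies \eqref{Innov2} as a DGFA representation in the limit is the strong linear independence of the columns of $C$ (Assumption (\ref{Ass2})) together with the uniform bound on $\hat\Lambda_N$, which by Proposition 1 makes the limiting output noise idiosyncratic — the two facts you had already established. If you drop the claim of full-vector convergence of $\hat\eb_N$ to $\wb_N$ (or weaken it to the entry-wise statement of Remark \ref{remark4}), your proof is correct and essentially the paper's.
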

\proof That $\vb_N(t) := \eb_N(t+1)$ and $\wb_N(t):= \hat \eb_N(t)$ are uncorrelated follows readily from the equation \eqref{InnInnov2} because $\hat \eb_N(t)$ is white. Moreover, all the eigenvalues of $\Lambda_N=C_NP_NC_N^{\top}+ R_N$ are positive and bounded below. Hence, $\hat \Lambda_N$ 
remains bounded for $N\to \infty$.
\qed

A remarkable property of the pure filter realization which follows already from  the calculation in \eqref{pintendea0} of the previous paragraph, is recast in the following statement.
\begin{corollary}
Under the  assumptions of Theorem \ref{th:kalman_filt},
consider  the (steady-state) covariance matrix of the state filtering error
\begin{align*}
\Pi_N &:= \E\,\left[ \xb(t)-\hat\xb_N(t)\right]\left[ \xb(t)-\hat\xb_N(t)\right]^{\top} \\
&=\left[ \,P_N^{-1} +C_N^{\top} R_N^{-1}C_N\,\right]^{-1}.
\end{align*}
Then, as $N\to \infty$, the filter error  covariance $\Pi_N$ converges to the zero matrix . 
\end{corollary}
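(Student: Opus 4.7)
The plan is to verify the displayed formula for $\Pi_N$ and then recycle the bound already established in the proof of Theorem \ref{th:kalman_pred}. First I would justify the expression
\[
\Pi_N = \left[ P_N^{-1} + C_N^{\top} R_N^{-1} C_N \right]^{-1}
\]
by the standard measurement-update identity of the Kalman filter: starting from $\Pi_N = P_N - P_N C_N^{\top}(C_N P_N C_N^{\top} + R_N)^{-1} C_N P_N$ and applying the matrix inversion lemma gives precisely the information form above. This is essentially bookkeeping and relies on $P_N>0$, which holds by reachability (Assumption \ref{Ass4}), and on $R_N>0$ (Assumption \ref{Ass5}).

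Next I would invoke the chain of inequalities that already appears in the proof of Theorem \ref{th:kalman_pred}. Because $\wb(0)$ is idiosyncratic (Assumption \ref{Ass3}), there exists $\alpha>0$ with $R_N \le \alpha I_N$, hence $R_N^{-1} \ge \alpha^{-1} I_N$, and therefore
\[
P_N^{-1} + C_N^{\top} R_N^{-1} C_N \;\ge\; \frac{1}{\alpha}\, C_N^{\top} C_N \;\ge\; \frac{\lambda_{\min}[C_N^{\top} C_N]}{\alpha}\, I.
\]
Inverting this inequality (which preserves the ordering since all matrices involved are positive definite) yields
\[
0 \;\le\; \Pi_N \;\le\; \frac{\alpha}{\lambda_{\min}[C_N^{\top} C_N]}\, I.
\]

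Finally, the strong linear independence of the columns of $C$ (Assumption \ref{Ass2}) is exactly the statement \eqref{slic} that $\lambda_{\min}[C_N^{\top} C_N] \to +\infty$ as $N\to\infty$, so the right-hand side above tends to zero, squeezing $\Pi_N$ to the zero matrix in the positive semidefinite order (equivalently, in any matrix norm). There is really no obstacle here: the corollary is a direct reading of the computation \eqref{pintendea0}, with the only slightly new ingredient being the identification of $[P_N^{-1}+C_N^{\top}R_N^{-1}C_N]^{-1}$ with the filter error covariance $\Pi_N$, which is a standard Kalman-filter fact.
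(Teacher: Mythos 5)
Your argument is correct and is essentially the paper's own: the paper proves the corollary by noting that $\Pi_N=\left[\,P_N^{-1}+C_N^{\top}R_N^{-1}C_N\,\right]^{-1}$ is exactly the quantity bounded in \eqref{pintendea0}, using $R_N\leq\alpha I_N$ (idiosyncrasy) and $\lambda_{\min}[C_N^{\top}C_N]\to\infty$ (condition \eqref{slic}), just as you do, with your explicit matrix-inversion-lemma verification of the information form of $\Pi_N$ being a standard fact the paper simply asserts in the statement. One cosmetic remark: inversion \emph{reverses} the Loewner ordering of positive definite matrices rather than preserving it, but the displayed inequality $0\leq\Pi_N\leq\alpha\,\lambda_{\min}[C_N^{\top}C_N]^{-1}I$ you obtain is the correct one.
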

This result implies that the limit for $N\to \infty$ of the (steady-state) filtered state estimate must  converge to the true state $\xb(t)$, in other words we may say that the filtered estimate is a {\em consistent estimator}. This should not be surprising since  it agrees with the previous general observation that in any bona-fide GDFA model \eqref{Modclass}     the hidden variable $\xb(t)$ can asymptotically be recovered exactly as a linear functional of the infinite cross sectional  history of the process.

\begin{remark} \label{remark4}
{\em Observe that even if  
the filtered state estimation error converges to zero (in mean square), in general we cannot recover the original idiosyncratic noise.
In fact, if the output noise covariance is uniformly coercive (that is there is a $c>0$ independent of $N$ such that
$R_N \geq cI_N$) then the steady-state covariance of  $\hat{{\boldsymbol{\delta}}}_N(t):=\hat{\eb}_N(t)-\wb_N(t)$   does not converge to zero as it is a 
rank $n$ matrix whose $n$ non-zero eigenvalues are bounded from below by a positive constant.
However, by using also in this case the arguments developed for the static case,  we can show that if $C_N$ is uniformly bounded  
then for each fixed $i$ the $i-th$ component of $\hat{{\boldsymbol{\delta}}}_N(t)$ converges to zero in mean square.}
\end{remark} 

\section{Examples}
We consider the problem of estimating the average concentration over time of two pollutants, namely benzene (C6H6) and carbon monoxide (CO),  in a certain city by means of a large number of sensors spread all over the area. This situation can be described by a DGFA model where the hidden factor vector describes these concentrations while the observed variables models the measurements taken from the sensors.\\
More in details, let $ \xb_1 (t)$ and $ \xb_2 (t)$ describe the average concentration in the city of C6H6 (in $10 \mu g/m^3$) and of CO (in $mg/m^3$), respectively, with sampling time equal to 1 hour, and let $\xb (t) = [\xb_1 (t), \;  \xb_2 (t) ] \tp $ be the vector of latent factors.
A good representation of the dynamics of the factors is given by the first equation of \eqref{Modclass}
where the state matrix $A$ and the the covariance  $Q$ of the white Gaussian noise  $\{ \vb (t)\} $ are
$$A = \bmat 0.9692  & -0.0442  \\ 0.2582  &  0.7707 \emat, \qquad Q =  \bmat 0.1682 &	0.2806 \\ 0.2806 &	0.7531 \emat. $$
This model has been identified  from the time series collected in the period 11 March 2004–3 April 2005 by the regional environmental protection agency (ARPA) within an Italian city  (for more details see \cite{DEVITO2008}). 
Suppose that $N$ sensors are available and that each sensor measures either the concentration of C6H6 or CO; the placement of the sensors is shown in Figure \ref{fig:sensor_net}. For simplicity, we assume that $N$ is a multiple of 4. 
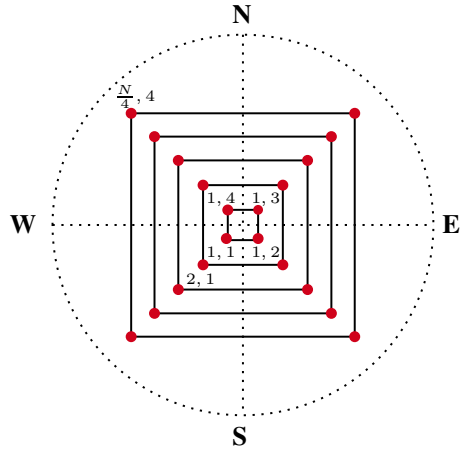
\begin{figure}[h!] \label{fig:sensor_net}
	\centering
	\tikzset{every picture/.style={line width=0.75pt}} 
	
	\begin{tikzpicture}[x=0.75pt,y=0.75pt,yscale=-0.73,xscale=0.73]
		
		\draw   (274.54,132) -- (363.84,132) -- (363.84,221.3) -- (274.54,221.3) -- cycle ;
		\draw   (258.09,115.55) -- (380.29,115.55) -- (380.29,237.75) -- (258.09,237.75) -- cycle ;
		\draw   (291.64,149.1) -- (346.75,149.1) -- (346.75,204.2) -- (291.64,204.2) -- cycle ;
		\draw   (308.72,166.18) -- (329.67,166.18) -- (329.67,187.12) -- (308.72,187.12) -- cycle ;
		\draw  [color={rgb, 255:red, 208; green, 2; blue, 27 }  ,draw opacity=1 ][fill={rgb, 255:red, 208; green, 2; blue, 27 }  ,fill opacity=1 ] (377.19,237.75) .. controls (377.19,236.04) and (378.58,234.65) .. (380.29,234.65) .. controls (382.01,234.65) and (383.39,236.04) .. (383.39,237.75) .. controls (383.39,239.46) and (382.01,240.85) .. (380.29,240.85) .. controls (378.58,240.85) and (377.19,239.46) .. (377.19,237.75) -- cycle ;
		\draw  [color={rgb, 255:red, 208; green, 2; blue, 27 }  ,draw opacity=1 ][fill={rgb, 255:red, 208; green, 2; blue, 27 }  ,fill opacity=1 ] (326.97,166.18) .. controls (326.97,167.67) and (328.18,168.88) .. (329.67,168.88) .. controls (331.16,168.88) and (332.37,167.67) .. (332.37,166.18) .. controls (332.37,164.69) and (331.16,163.48) .. (329.67,163.48) .. controls (328.18,163.48) and (326.97,164.69) .. (326.97,166.18) -- cycle ;
		\draw  [color={rgb, 255:red, 208; green, 2; blue, 27 }  ,draw opacity=1 ][fill={rgb, 255:red, 208; green, 2; blue, 27 }  ,fill opacity=1 ] (377.19,115.55) .. controls (377.19,113.84) and (378.58,112.45) .. (380.29,112.45) .. controls (382.01,112.45) and (383.39,113.84) .. (383.39,115.55) .. controls (383.39,117.26) and (382.01,118.65) .. (380.29,118.65) .. controls (378.58,118.65) and (377.19,117.26) .. (377.19,115.55) -- cycle ;
		\draw  [color={rgb, 255:red, 208; green, 2; blue, 27 }  ,draw opacity=1 ][fill={rgb, 255:red, 208; green, 2; blue, 27 }  ,fill opacity=1 ] (254.99,115.55) .. controls (254.99,113.84) and (256.38,112.45) .. (258.09,112.45) .. controls (259.81,112.45) and (261.19,113.84) .. (261.19,115.55) .. controls (261.19,117.26) and (259.81,118.65) .. (258.09,118.65) .. controls (256.38,118.65) and (254.99,117.26) .. (254.99,115.55) -- cycle ;
		\draw  [color={rgb, 255:red, 208; green, 2; blue, 27 }  ,draw opacity=1 ][fill={rgb, 255:red, 208; green, 2; blue, 27 }  ,fill opacity=1 ] (254.99,237.75) .. controls (254.99,236.04) and (256.38,234.65) .. (258.09,234.65) .. controls (259.81,234.65) and (261.19,236.04) .. (261.19,237.75) .. controls (261.19,239.46) and (259.81,240.85) .. (258.09,240.85) .. controls (256.38,240.85) and (254.99,239.46) .. (254.99,237.75) -- cycle ;
		\draw  [color={rgb, 255:red, 208; green, 2; blue, 27 }  ,draw opacity=1 ][fill={rgb, 255:red, 208; green, 2; blue, 27 }  ,fill opacity=1 ] (271.44,132) .. controls (271.44,130.29) and (272.83,128.9) .. (274.54,128.9) .. controls (276.26,128.9) and (277.64,130.29) .. (277.64,132) .. controls (277.64,133.71) and (276.26,135.1) .. (274.54,135.1) .. controls (272.83,135.1) and (271.44,133.71) .. (271.44,132) -- cycle ;
		\draw  [color={rgb, 255:red, 208; green, 2; blue, 27 }  ,draw opacity=1 ][fill={rgb, 255:red, 208; green, 2; blue, 27 }  ,fill opacity=1 ] (326.57,186.02) .. controls (326.57,184.31) and (327.95,182.92) .. (329.67,182.92) .. controls (331.38,182.92) and (332.77,184.31) .. (332.77,186.02) .. controls (332.77,187.73) and (331.38,189.12) .. (329.67,189.12) .. controls (327.95,189.12) and (326.57,187.73) .. (326.57,186.02) -- cycle ;
		\draw  [color={rgb, 255:red, 208; green, 2; blue, 27 }  ,draw opacity=1 ][fill={rgb, 255:red, 208; green, 2; blue, 27 }  ,fill opacity=1 ] (343.65,204.2) .. controls (343.65,202.49) and (345.03,201.1) .. (346.75,201.1) .. controls (348.46,201.1) and (349.85,202.49) .. (349.85,204.2) .. controls (349.85,205.92) and (348.46,207.3) .. (346.75,207.3) .. controls (345.03,207.3) and (343.65,205.92) .. (343.65,204.2) -- cycle ;
		\draw  [color={rgb, 255:red, 208; green, 2; blue, 27 }  ,draw opacity=1 ][fill={rgb, 255:red, 208; green, 2; blue, 27 }  ,fill opacity=1 ] (360.74,221.3) .. controls (360.74,219.59) and (362.13,218.2) .. (363.84,218.2) .. controls (365.56,218.2) and (366.94,219.59) .. (366.94,221.3) .. controls (366.94,223.01) and (365.56,224.4) .. (363.84,224.4) .. controls (362.13,224.4) and (360.74,223.01) .. (360.74,221.3) -- cycle ;
		\draw  [color={rgb, 255:red, 208; green, 2; blue, 27 }  ,draw opacity=1 ][fill={rgb, 255:red, 208; green, 2; blue, 27 }  ,fill opacity=1 ] (271.44,221.3) .. controls (271.44,219.59) and (272.83,218.2) .. (274.54,218.2) .. controls (276.26,218.2) and (277.64,219.59) .. (277.64,221.3) .. controls (277.64,223.01) and (276.26,224.4) .. (274.54,224.4) .. controls (272.83,224.4) and (271.44,223.01) .. (271.44,221.3) -- cycle ;
		\draw  [color={rgb, 255:red, 208; green, 2; blue, 27 }  ,draw opacity=1 ][fill={rgb, 255:red, 208; green, 2; blue, 27 }  ,fill opacity=1 ] (360.74,132) .. controls (360.74,130.29) and (362.13,128.9) .. (363.84,128.9) .. controls (365.56,128.9) and (366.94,130.29) .. (366.94,132) .. controls (366.94,133.71) and (365.56,135.1) .. (363.84,135.1) .. controls (362.13,135.1) and (360.74,133.71) .. (360.74,132) -- cycle ;
		\draw  [color={rgb, 255:red, 208; green, 2; blue, 27 }  ,draw opacity=1 ][fill={rgb, 255:red, 208; green, 2; blue, 27 }  ,fill opacity=1 ] (304.62,186.02) .. controls (304.62,184.31) and (306.01,182.92) .. (307.72,182.92) .. controls (309.43,182.92) and (310.82,184.31) .. (310.82,186.02) .. controls (310.82,187.73) and (309.43,189.12) .. (307.72,189.12) .. controls (306.01,189.12) and (304.62,187.73) .. (304.62,186.02) -- cycle ;
		\draw  [color={rgb, 255:red, 208; green, 2; blue, 27 }  ,draw opacity=1 ][fill={rgb, 255:red, 208; green, 2; blue, 27 }  ,fill opacity=1 ] (305.62,166.28) .. controls (305.62,164.57) and (307.01,163.18) .. (308.72,163.18) .. controls (310.43,163.18) and (311.82,164.57) .. (311.82,166.28) .. controls (311.82,167.99) and (310.43,169.38) .. (308.72,169.38) .. controls (307.01,169.38) and (305.62,167.99) .. (305.62,166.28) -- cycle ;
		\draw  [color={rgb, 255:red, 208; green, 2; blue, 27 }  ,draw opacity=1 ][fill={rgb, 255:red, 208; green, 2; blue, 27 }  ,fill opacity=1 ] (343.65,149.1) .. controls (343.65,147.39) and (345.03,146) .. (346.75,146) .. controls (348.46,146) and (349.85,147.39) .. (349.85,149.1) .. controls (349.85,150.81) and (348.46,152.2) .. (346.75,152.2) .. controls (345.03,152.2) and (343.65,150.81) .. (343.65,149.1) -- cycle ;
		\draw  [color={rgb, 255:red, 208; green, 2; blue, 27 }  ,draw opacity=1 ][fill={rgb, 255:red, 208; green, 2; blue, 27 }  ,fill opacity=1 ] (288.54,204.2) .. controls (288.54,202.49) and (289.93,201.1) .. (291.64,201.1) .. controls (293.35,201.1) and (294.74,202.49) .. (294.74,204.2) .. controls (294.74,205.92) and (293.35,207.3) .. (291.64,207.3) .. controls (289.93,207.3) and (288.54,205.92) .. (288.54,204.2) -- cycle ;
		\draw  [color={rgb, 255:red, 208; green, 2; blue, 27 }  ,draw opacity=1 ][fill={rgb, 255:red, 208; green, 2; blue, 27 }  ,fill opacity=1 ] (288.54,149.1) .. controls (288.54,147.39) and (289.93,146) .. (291.64,146) .. controls (293.35,146) and (294.74,147.39) .. (294.74,149.1) .. controls (294.74,150.81) and (293.35,152.2) .. (291.64,152.2) .. controls (289.93,152.2) and (288.54,150.81) .. (288.54,149.1) -- cycle ;
		\draw   (241.97,99.43) -- (396.42,99.43) -- (396.42,253.88) -- (241.97,253.88) -- cycle ;
		\draw  [color={rgb, 255:red, 208; green, 2; blue, 27 }  ,draw opacity=1 ][fill={rgb, 255:red, 208; green, 2; blue, 27 }  ,fill opacity=1 ] (238.87,253.88) .. controls (238.87,252.16) and (240.26,250.78) .. (241.97,250.78) .. controls (243.68,250.78) and (245.07,252.16) .. (245.07,253.88) .. controls (245.07,255.59) and (243.68,256.98) .. (241.97,256.98) .. controls (240.26,256.98) and (238.87,255.59) .. (238.87,253.88) -- cycle ;
		\draw  [color={rgb, 255:red, 208; green, 2; blue, 27 }  ,draw opacity=1 ][fill={rgb, 255:red, 208; green, 2; blue, 27 }  ,fill opacity=1 ] (393.32,253.88) .. controls (393.32,252.16) and (394.71,250.78) .. (396.42,250.78) .. controls (398.13,250.78) and (399.52,252.16) .. (399.52,253.88) .. controls (399.52,255.59) and (398.13,256.98) .. (396.42,256.98) .. controls (394.71,256.98) and (393.32,255.59) .. (393.32,253.88) -- cycle ;
		\draw  [color={rgb, 255:red, 208; green, 2; blue, 27 }  ,draw opacity=1 ][fill={rgb, 255:red, 208; green, 2; blue, 27 }  ,fill opacity=1 ] (393.32,99.43) .. controls (393.32,97.71) and (394.71,96.33) .. (396.42,96.33) .. controls (398.13,96.33) and (399.52,97.71) .. (399.52,99.43) .. controls (399.52,101.14) and (398.13,102.53) .. (396.42,102.53) .. controls (394.71,102.53) and (393.32,101.14) .. (393.32,99.43) -- cycle ;
		\draw  [color={rgb, 255:red, 208; green, 2; blue, 27 }  ,draw opacity=1 ][fill={rgb, 255:red, 208; green, 2; blue, 27 }  ,fill opacity=1 ] (238.87,99.43) .. controls (238.87,97.71) and (240.26,96.33) .. (241.97,96.33) .. controls (243.68,96.33) and (245.07,97.71) .. (245.07,99.43) .. controls (245.07,101.14) and (243.68,102.53) .. (241.97,102.53) .. controls (240.26,102.53) and (238.87,101.14) .. (238.87,99.43) -- cycle ;
		\draw  [dash pattern={on 0.84pt off 2.51pt}]  (319.19,45.09) -- (319.19,308.21) ;
		\draw  [dash pattern={on 0.84pt off 2.51pt}]  (187.63,176.65) -- (450.76,176.65) ;
		\draw  [dash pattern={on 0.84pt off 2.51pt}] (187.63,176.65) .. controls (187.63,103.99) and (246.53,45.09) .. (319.19,45.09) .. controls (391.85,45.09) and (450.76,103.99) .. (450.76,176.65) .. controls (450.76,249.31) and (391.85,308.21) .. (319.19,308.21) .. controls (246.53,308.21) and (187.63,249.31) .. (187.63,176.65) -- cycle ;
		
		\draw (292,190) node [anchor=north west][inner sep=0.75pt]  [font=\small] [align=left] {{\tiny $1,1$}};
		\draw (323,152) node [anchor=north west][inner sep=0.75pt]  [font=\small] [align=left] {{\tiny $1,3$}};
		\draw (292,152) node [anchor=north west][inner sep=0.75pt]  [font=\small] [align=left] {{\tiny $1,4$}};
		\draw (278,208) node [anchor=north west][inner sep=0.75pt]  [font=\small] [align=left] {{\tiny $2,1$}};
		\draw (323,190) node [anchor=north west][inner sep=0.75pt]  [font=\small] [align=left] {{\tiny  $1,2$}};
		\draw (228,78) node [anchor=north west][inner sep=0.75pt]  [font=\small] [align=left] {{\tiny $\frac{N}{4},4$}};
		\draw (310,23) node [anchor=north west][inner sep=0.75pt]  [font=\normalsize] [align=left] {{\fontfamily{ptm}\selectfont \textbf{N}}};
		\draw (310,314) node [anchor=north west][inner sep=0.75pt]  [font=\normalsize] [align=left] {{\fontfamily{ptm}\selectfont \textbf{S}}};
		\draw (156,168) node [anchor=north west][inner sep=0.75pt]  [font=\normalsize] [align=left] {{\fontfamily{ptm}\selectfont \textbf{W}}};
		\draw (455,168) node [anchor=north west][inner sep=0.75pt]  [font=\normalsize] [align=left] {{\fontfamily{ptm}\selectfont \textbf{E}}};

	\end{tikzpicture}
	\caption{ Placement of the $N$ sensors all over the city. The sensors, represented by red circles, are located on the vertices of concentric squares. In particular, the $(l,k)$ sensor is situated on the $k$-th vertex of the $l$-th square. }
\end{figure}
Since the concentration of the pollutants varies considerably within the city, the sensor output at time $t$ is a measure of the average concentration corrupted by a local random fluctuation related to its position. In addition, each sensor is affected by an accidental measurement error which is independent from the other sensors.
Then, we can describe the observation process by the second equation of \eqref{Modclass} 
where  $C_N  = \bmat 1 & 0 & 1 &  0 & 1 & 0 & \dots  \\ 0 & 1 &  0 &  1 & 0 &  1 &\dots \emat \tp $
and the idiosyncratic process $\{\wb_N (t)\} $ is modeled as follows.
We define the idiosyncratic noise vector as $\wb_N (t) = \bmat \wb_{1,1}(t), & \dots, &  \wb_{1,4}(t) ,&   \wb_{2,1 }(t), \dots , \wb_{\frac{N}{4},4 }(t) \emat \tp  $
and we assume that the component $\wb_{l,k}(t)$ is given by a term proportional to the average of the noise affecting the  sensors located in the preceding square plus an uncorrelated white Gaussian noise. 
Mathematically, we have for  $k = 1, \dots, 4$ and $l = 2,3, \dots, N/4$ 
\begin{align*}
w_{1,k} (t) &= n_{1,k}(t) ,\\
w_{l,k} (t) &= \frac{0.5}{4}\sum_{h=0}^{4} w_{l-1,h} + n_{l,k}(t).
\end{align*}
Here, $\{\nb_{l,k}(t)\}$ is a normalized white Gaussian noise uncorrelated at all times with $\{ \vb (t)\} $  and with $\{ \nb_{\bar k, \bar l} (t)\} $ for $(k,l) \neq (\bar k, \bar l) .$ \\
We estimate the latent variable $\xb (t) $  by the Kalman one-step ahead predictor \eqref{Predittore} and the pure filter \eqref{Innov2} for an increasing number $N$ of sensors. The results are summarized in Figure \ref{fig:errors} and \ref{fig:covariance} and they confirm that the pure filter yields to a consistent estimate, while the state prediction error does not converge to zero as $N$ diverges. 
\begin{figure}
	\centering
\begin{tabular}{l l l}
		\includegraphics[width=0.6\linewidth]{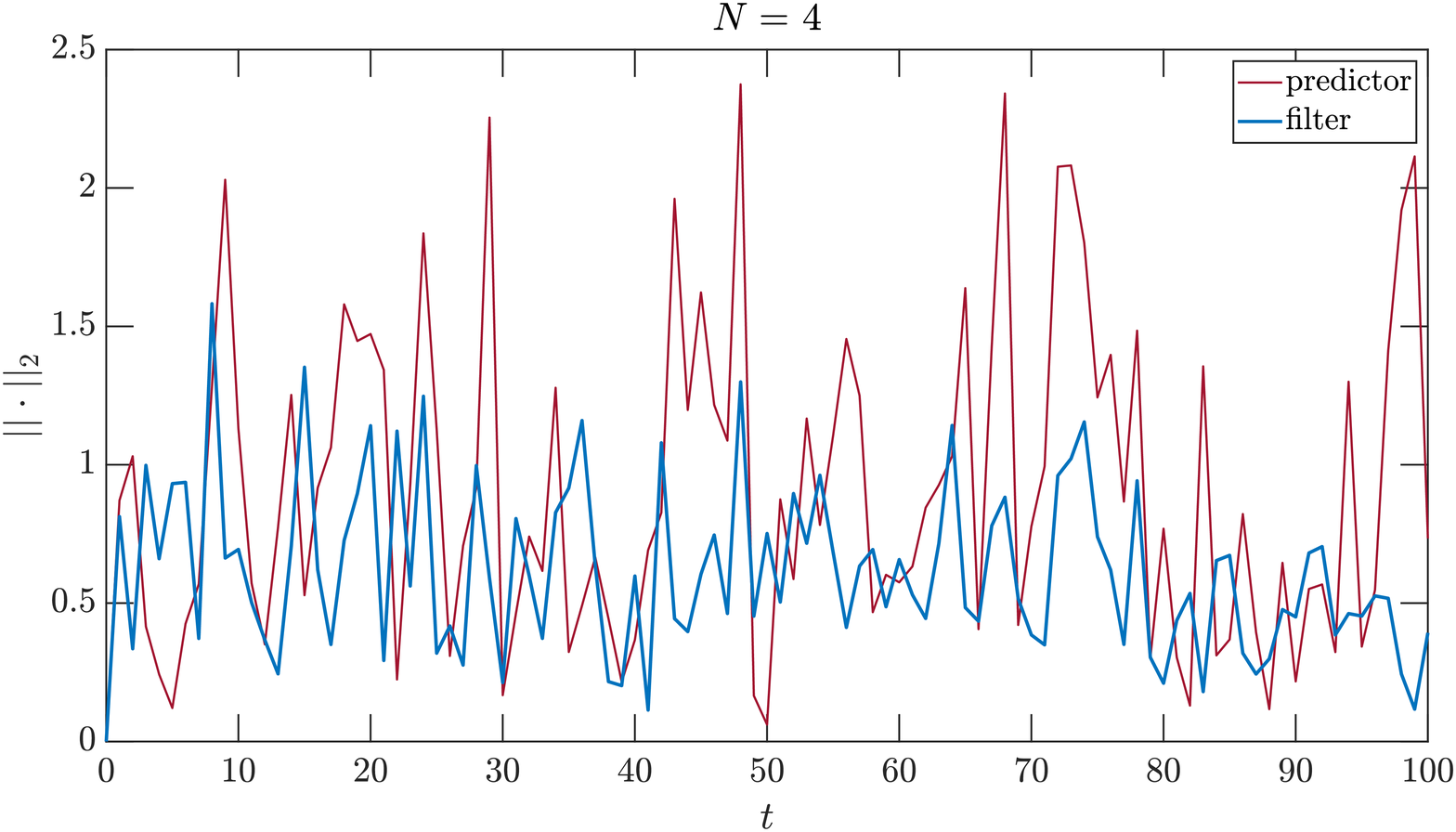}  \\
		\includegraphics[width=0.6\linewidth]{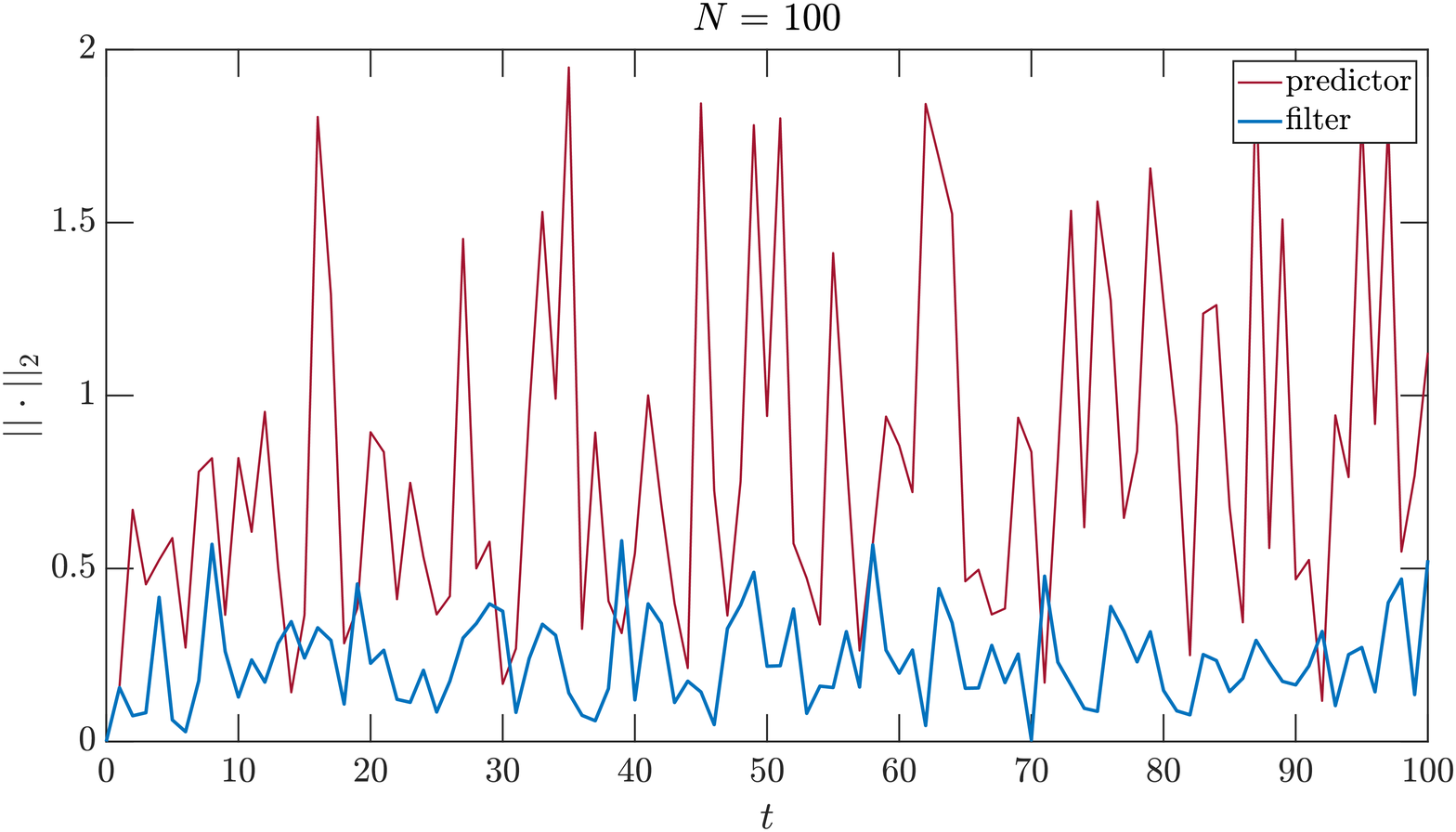} \\
		\includegraphics[width=0.6\linewidth]{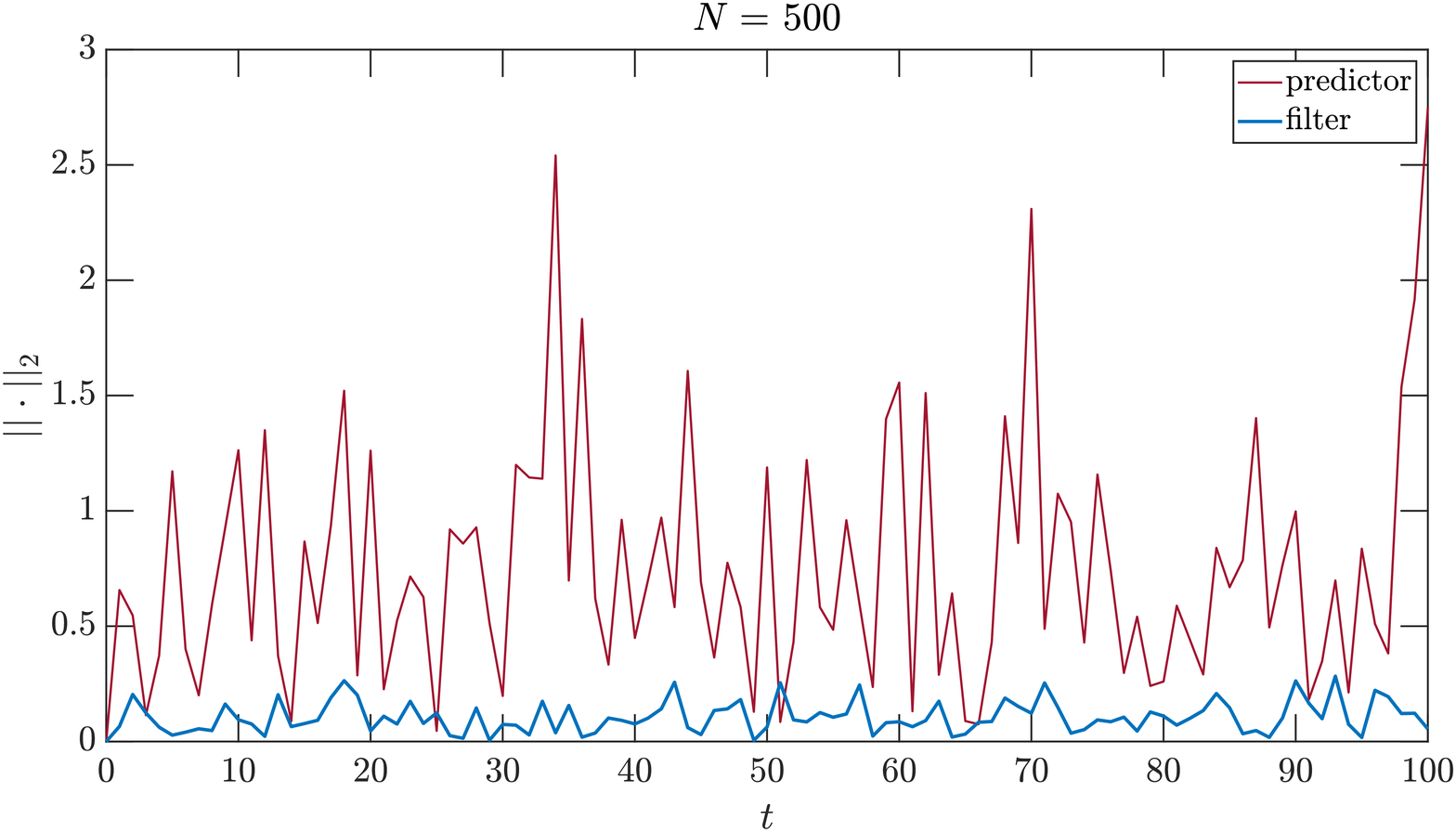} \\
		\includegraphics[width=0.6\linewidth ]{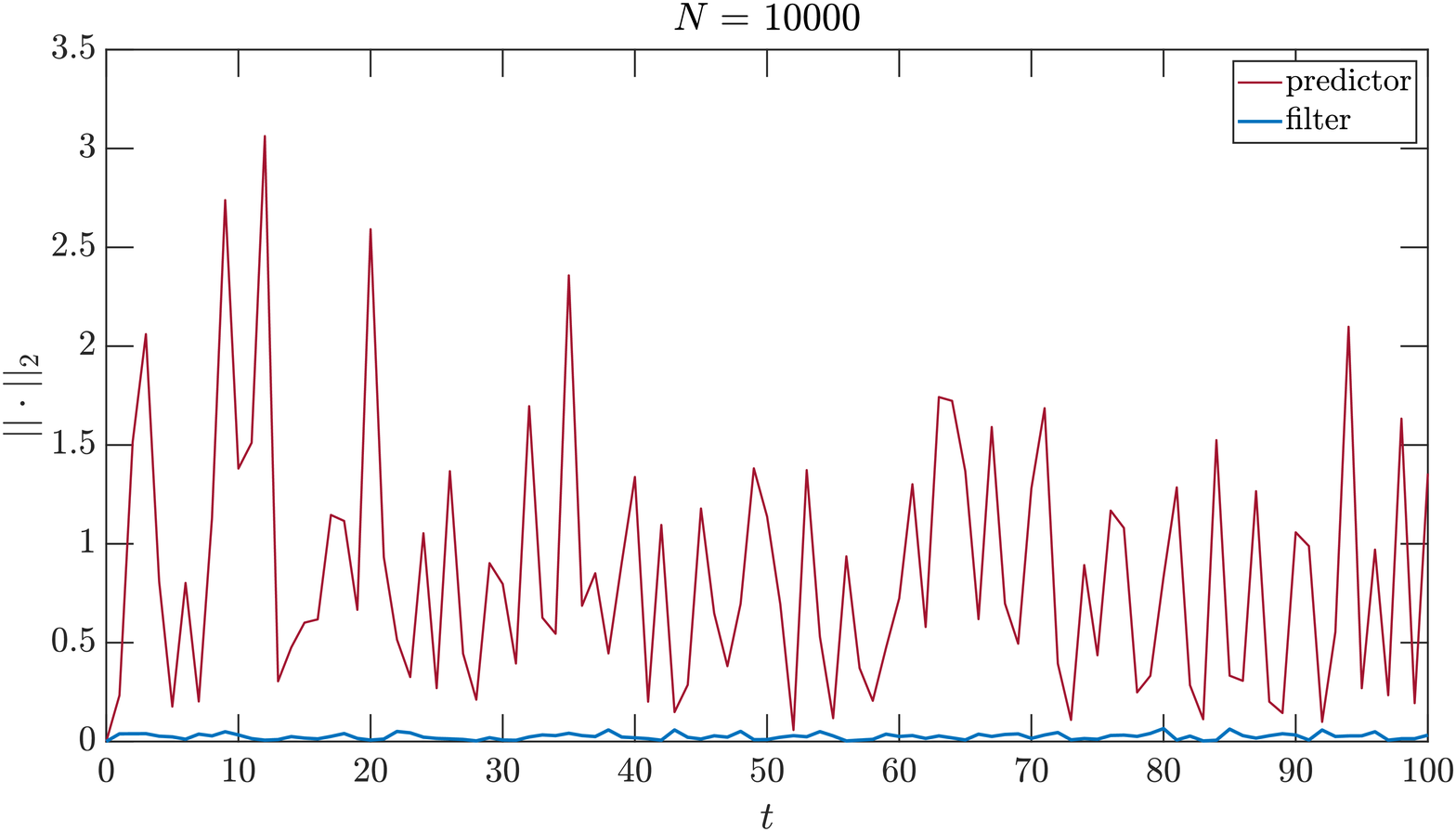} 		
\end{tabular}
\caption{ Euclidean norm of the steady-state prediction error $x(t)-\hat x_N(t|t-1) $ and filtering error $ x(t)-\hat x_N(t) $ for increasing dimension $N$. }
\label{fig:errors}
\end{figure}
\begin{figure}
	\centering
	\includegraphics[width=0.6\linewidth]{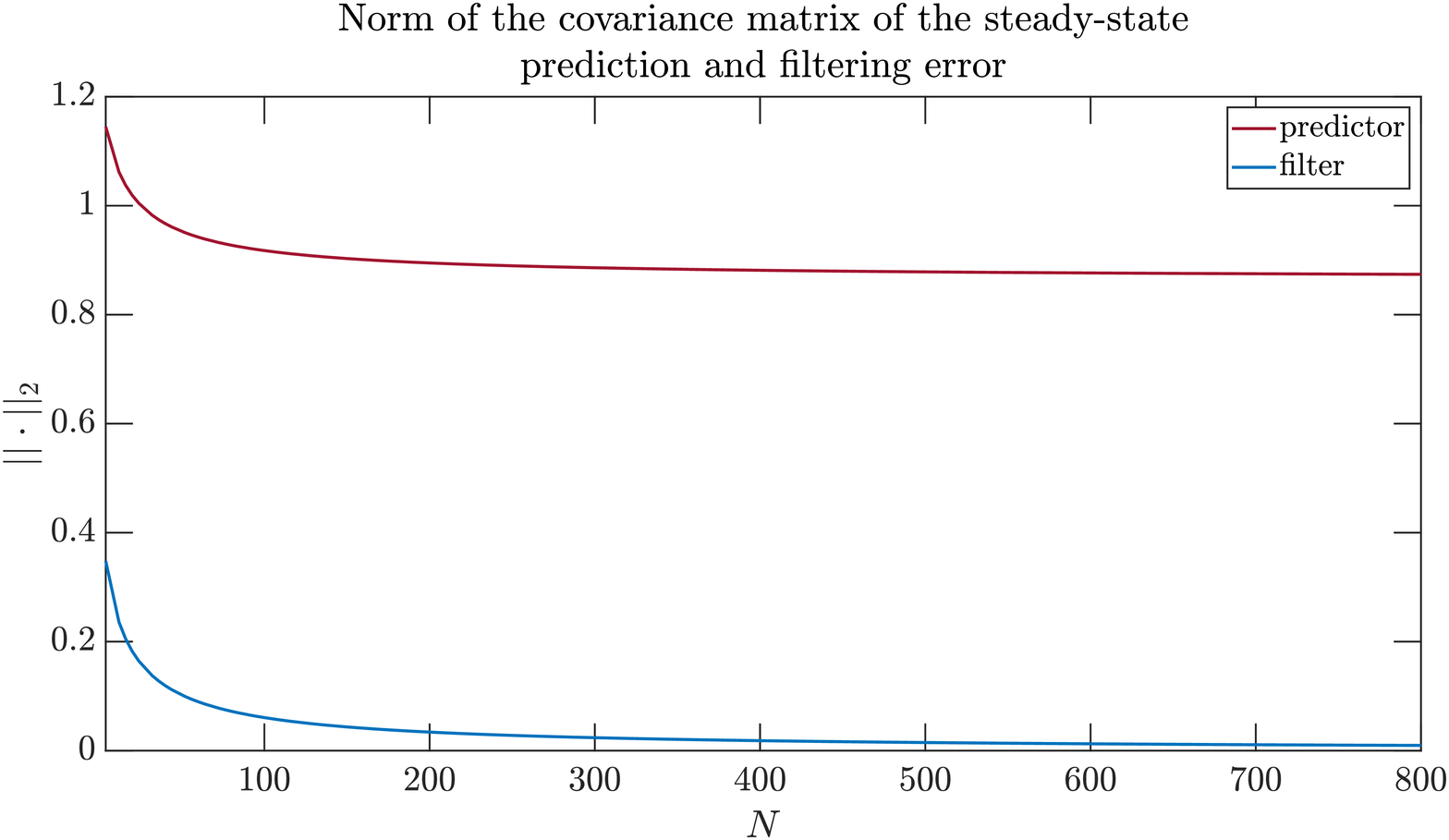}
\caption{ Euclidean norm of the steady-state prediction error covariance matrix $P_N$ and of the steady-state filter error covariance matrix $\Pi_N$ for increasing dimension $N$. }
\label{fig:covariance}
\end{figure}
To better illustrate Remark \ref{remark4}, we also compute the ``Euclidean'' and the ``infinity'' norms of $\hat{{\boldsymbol{\delta}}}_N(t):=\hat{\eb}_N(t)-\wb_N(t)$, i.e.
\begin{align*}
\|\hat{{\boldsymbol{\delta}}}_N(t)\|&:=\sqrt{\E[\hat{{\boldsymbol{\delta}}}_N(t)\tp \hat{{\boldsymbol{\delta}}}_N(t)]}=\sqrt{{\rm tr}[C_N\Pi_NC_N\tp ]} \\
\|\hat{{\boldsymbol{\delta}}}_N(t)\|_\infty &:=\max_i\sqrt{\E[\hat{{\boldsymbol{\delta}}}_N(t)_i]^2}=\max_i \sqrt{[C_N\Pi_NC_N\tp]_{ii}}
\end{align*}
with $\hat{{\boldsymbol{\delta}}}_N(t)_i$ being the $i$-th component of $\hat{{\boldsymbol{\delta}}}_N(t)$
and  $[C_N\Pi_NC_N\tp]_{ii}$ the $i-$th diagonal element of $C_N\Pi_NC_N\tp$. 
The results are shown in Figure \ref{fig:delta_convergence} and they reveal that, according to what observed in Remark \ref{remark4}, the overall vector $\hat{{\boldsymbol{\delta}}}_N(t)$ does not converge to zero in mean square even if  its $i-th$ component does converge to zero for each $i$. 
\begin{figure}
	\centering
	\includegraphics[width=0.6\linewidth]{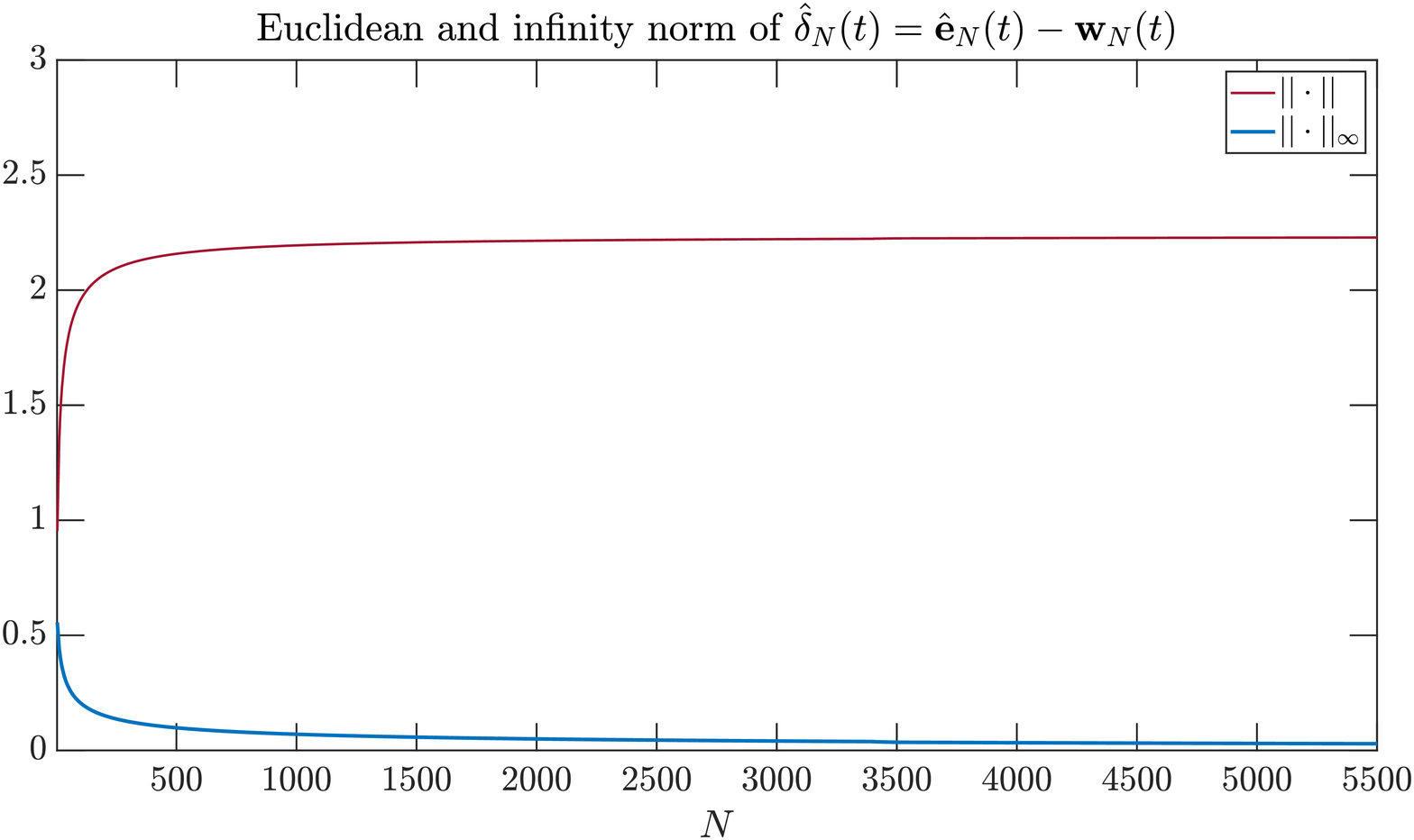}
	\caption{ Euclidean and infinity norm of the random variable $\hat{{\boldsymbol{\delta}}}_N(t) = \hat{\eb}_N(t)-\wb_N(t) $ for increasing dimension $N$. }
	\label{fig:delta_convergence}
\end{figure}

\section{Conclusion}
We have shown that the (pure) Kalman filter leads to a legitimate GDFA model  while the standard Kalman predictor does not. As a consequence only the former  model allows   a consistent estimation of the factor process  as  the cross-sectional dimension tends to infinity.
\appendix

{\bf Appendix}
\section{About the space $\ell^2(\Sigma)$}
We analyse the relation between  $\ell^2$ and $\ell^2(\Sigma)$. In general this relation depends on $\Sigma$. Let us first consider the  case where the components of  $\yb$ are zero-mean uncorrelated  random variables with unbounded variance, say $\Sigma \geq I$ (where $I$ is the identity operator); for example let $\yb$ have uncorrelated components with  $\mathbb{E}  \yb(k)^2 = k^2$.
Then   $\| a \|^2_\Sigma \geq \| a \|^2$ and hence $\ell^2 \supseteq \ell^2(\Sigma)$.
On the other hand, let $a:= \{a(k)=1/k,\,k \in \mathbb{N}\}$;
clearly, $a\in\ell^2$ but $a\not\in\ell^2(\Sigma)$. Hence, in this case, $\ell^2 \supsetneqq \ell^2(\Sigma)$.

When the $\yb(k)$'s are zero-mean random variables with a bounded variance matrix, say $\Sigma \leq I$ we have $\| a \|^2_\Sigma \leq \| a \|^2$ and hence
 $\ell^2 \subseteq \ell^2(\Sigma)$.
Consider in particular the case when $\mathbb{E}  \yb(k)^2 = \frac{1}{k^2}.$
Then, let $a:= \{a(k)=1,\,\forall k \in \mathbb{N}\}$:
clearly, $a\not\in\ell^2$ but $a\in\ell^2(\Sigma)$.
Hence, in this  case, $\ell^2 \subsetneqq \ell^2(\Sigma)$.

\section{The notion of strong linear independence}

In this section we collect some relevant facts discussed in \cite{Bottegal-P-15}
where more details and the proofs of the collected results are available.
Let $C\in\Rbb^{\infty\times n}$ and $c^i, \,i=1,\ldots,n$
be the columns of $C$.
Let $C_N$ be the (finite)
submatrix of $C$ obtained by extracting the first $N$
rows of $C$.
Let $c_N^i$ be the $i$-th column of $C_N$ and define
$$
 \tilde{c}_N^i := c_N^i - \Pi[\, c_N^i \mid \check{\mathcal{C}}_N^i]
$$
where $\Pi$ is the orthogonal projection onto the  Euclidean space $ \check{\mathcal{C}}_N^i= \Span \{c_N^j ,\,j\neq i \, \} $ of dimension (at most) $n-1$.

\begin{definition}
{\em 
The column vectors $c^i, \,i=1,\ldots,n$ in $\Rbb^{\infty}$ are {\bf strongly linearly independent} if
\begin{equation}\label{eq:condition}
\lim_{N\rightarrow\infty}\|\tilde{c}_N^i \|_2 = +\infty\,\qq \,i=1,\ldots,n\,.
\end{equation}}
\end{definition}
If $n=1$ the condition \eqref{eq:condition} is equivalent to $\|c\|_2= \infty$.
In a sense, this condition says that the tails of  two strongly linearly independent vectors in $\Rbb^{\infty}$ cannot get ``too close'' asymptotically.
 \begin{theorem}\label{Thm:Strong}
   Let $\yb$ be a purely deterministic sequence of rank $n$, i.e. let
$$
\yb(k)\,   = \sum_{i=1}^{n} \, c^i(k)\, \xb_i\,,\qquad  k\in \Zbb_+\,;
$$
with uncorrelated latent variables $\xb_i$. Then $\yb$  is   not idiosyncratic  if and only if, the vectors $c^i, \,i=1,\ldots,n$ are strongly linearly independent.
\end{theorem}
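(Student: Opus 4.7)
The plan is to split the proof into two directions: the ($\Leftarrow$) part (SLI $\Rightarrow$ not idiosyncratic) is constructive, while the converse requires an inductive reduction. First I would set up notation. Writing $\sigma_i^2 := \E[\xb_i^2]$ (assumed positive, else drop that index), the covariance of $\yb$ decomposes as $\Sigma = \sum_{i=1}^n \sigma_i^2 c^i (c^i)^\top$, so that for any $a \in \ell^2 \cap \ell^2(\Sigma)$,
\[
\E[(a^\top \yb)^2] = \sum_{i=1}^n \sigma_i^2 (a^\top c^i)^2 .
\]
Hence $\yb$ is not idiosyncratic if and only if some AS $\{a_N\}$ leaves at least one inner product $a_N^\top c^{i_0}$ bounded away from zero.

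For ($\Leftarrow$), assuming SLI, I would exhibit an explicit averaging sequence that isolates a chosen factor. Fix $i=1$ (the argument is symmetric) and define
\[
a_N := \frac{\tilde c_N^1}{\|\tilde c_N^1\|^2},
\]
viewed in $\ell^2$ by zero-padding beyond index $N$. Then $\|a_N\|^2 = 1/\|\tilde c_N^1\|^2 \to 0$ by SLI, so $\{a_N\}$ is an AS (and trivially lies in $\ell^2(\Sigma)$ by finite support). By construction $\tilde c_N^1 \perp c_N^j$ for $j \neq 1$ and $(\tilde c_N^1)^\top c_N^1 = \|\tilde c_N^1\|^2$, so $a_N^\top c^i = \delta_{i,1}$ and hence $a_N^\top \yb = \xb_1$ for every $N$. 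Since $\xb_1$ has positive variance, $\yb$ fails to be idiosyncratic.

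For ($\Rightarrow$), I would argue by contrapositive and induction on $n$. Suppose SLI fails, so by monotonicity of $\|\tilde c_N^i\|$ in $N$ some index (WLOG $i_0 = 1$) satisfies $\|\tilde c_N^1\| \uparrow L < \infty$. Let $\beta_N = (\beta_N^2,\ldots,\beta_N^n)$ be the minimizers in $\tilde c_N^1 = c_N^1 - \sum_{j\neq 1}\beta_N^j c_N^j$. After extracting a subsequence along which $\beta_N \to \beta$ and $\tilde c_N^1$ converges in $\ell^2$ to some $\tilde c^1$ with $\|\tilde c^1\| \le L$, one obtains the asymptotic relation $c^1 = \sum_{j\neq 1}\beta^j c^j + \tilde c^1$. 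Substituting and regrouping,
\[
\yb = \tilde c^1 \xb_1 + \sum_{j=2}^n c^j(\xb_j + \beta^j \xb_1).
\]
The first summand has an $\ell^2$ loading and therefore contributes a bounded covariance operator (an idiosyncratic addend). The second is a rank-$(n-1)$ expression in $c^2,\ldots,c^n$ whose coefficients are correlated through the shared $\xb_1$; a Gram--Schmidt orthogonalization of these coefficients, absorbed into a linear transformation of the loading columns, produces a genuine rank-$(n-1)$ factor representation with uncorrelated latent variables, to which the induction hypothesis applies. The sum of two idiosyncratic pieces is idiosyncratic, and the base case $n=0$ is vacuous.

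The main obstacle is the regrouping step in ($\Rightarrow$). Because the new factor variables all share $\xb_1$, the inductive hypothesis does not apply verbatim; the Gram--Schmidt step simultaneously alters the $n-1$ loading columns, and I need to verify that a failure of SLI at level $n-1$ persists through this transformation so that the induction really can proceed. Equally delicate is the extraction of a convergent subsequence for $\{\beta_N\}$, which rests on some coercivity of the Gram matrix of $\{c_N^j\}_{j\neq 1}$; the degenerate case where several columns are simultaneously asymptotically colinear would require a joint reduction of those columns (or an extra layer of induction on the number of dependent columns), and that is where I expect the bulk of the technical work to lie.
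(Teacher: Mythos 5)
Your proposal cannot be checked against an in-paper argument: the appendix explicitly collects Theorem~\ref{Thm:Strong} from \cite{Bottegal-P-15} and defers all proofs there, so I can only assess it on its merits. Your ($\Leftarrow$) half is correct and is the canonical argument: the finitely supported averaging sequence $a_N=\tilde c_N^1/\|\tilde c_N^1\|^2$ satisfies $\|a_N\|=1/\|\tilde c_N^1\|\to 0$ under strong linear independence, annihilates $c^j$ for $j\neq 1$, and gives $a_N\tp\yb=\xb_1$ exactly, so $\yb$ is not idiosyncratic (this is precisely the ``Bayes/minimum-norm averaging'' mechanism the paper exploits elsewhere, e.g.\ in Proposition~\ref{prop2}).

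The ($\Rightarrow$) direction, however, has a genuine gap, and it is not the technical one you flag (subsequence extraction, Gram--Schmidt bookkeeping): the contrapositive you are trying to prove, ``SLI fails $\Rightarrow$ $\yb$ idiosyncratic'', is false for $n\ge 2$, so no refinement of the induction can close it. Take $n=2$, $c^1=(1,1,1,\dots)\tp$, $c^2=(1,\tfrac12,\tfrac13,\dots)\tp$, unit-variance uncorrelated factors: SLI fails since $\|\tilde c_N^2\|\le\|c_N^2\|$ stays bounded, yet the averaging sequence of Example~\ref{example1} gives $a_N\tp\yb\to\xb_1\neq 0$, so $\yb$ is not idiosyncratic. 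Your own reduction exposes this: after removing the offending column, the rank-$(n-1)$ remainder retains the columns that are not in $\ell^2$, and the induction hypothesis then certifies that this remainder is \emph{not} idiosyncratic, which is the opposite of what you need; ``sum of two idiosyncratic pieces'' never materializes. What your decomposition does establish, if carried through, is the correct dichotomy: $\yb$ is idiosyncratic if and only if every column (equivalently $Cb$ for every $b$) lies in $\ell^2$, whereas SLI is the strictly stronger property that no nonzero combination $Cb$ remains in $\ell^2$, equivalently $\lambda_{\min}[C_N\tp C_N]\to\infty$, equivalently \emph{every} factor is recoverable by averaging. The stated equivalence is thus true only under that stronger reading of ``not idiosyncratic'' (no idiosyncratic component, i.e.\ full recoverability of the factor space, which is the sense established in the cited reference); for $n=1$ the literal statement is fine, but for $n\ge2$ only your ($\Leftarrow$) half survives, and a correct proof must target the reformulated converse rather than the one you attempted.
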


\begin{corollary}
$C\in \Rbb^{\infty \times n}$ has strongly linearly independent columns if and only if $ \lim_{N\to \infty} \lambda_{min} [ C_N\tp C_N]= \infty$.
\end{corollary}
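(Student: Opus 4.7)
The plan is to reduce both conditions to a statement about the $n\times n$ Gram matrix $G_N:=C_N\tp C_N$ by means of a classical identity that connects its inverse with the residual norms $\|\tilde c_N^i\|_2$. Specifically, I would first establish the Schur-complement identity
\begin{equation*}
[G_N^{-1}]_{ii} \;=\; \frac{1}{\|\tilde c_N^i\|_2^2}, \qquad i=1,\ldots,n,
\end{equation*}
valid as soon as $G_N$ is positive definite (which happens for all $N$ large enough under either hypothesis of the corollary, since both imply the columns of $C_N$ are eventually linearly independent). The identity follows by partitioning $G_N$ so that the $i$-th row and column are isolated from the rest: the $(i,i)$-entry of $G_N^{-1}$ equals the reciprocal of the Schur complement $\|c_N^i\|^2-(c_N^i)\tp\check{C}_N^i\bigl((\check{C}_N^i)\tp \check{C}_N^i\bigr)^{-1}(\check{C}_N^i)\tp c_N^i$, and this Schur complement is exactly $\|\tilde c_N^i\|_2^2$ by the very definition of the orthogonal projection onto $\check{\mathcal{C}}_N^i$.

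With this identity in hand, both implications are immediate after noting that $\lambda_{\min}[G_N]\to +\infty$ is equivalent to $\lambda_{\max}[G_N^{-1}]\to 0$. For the ``only if'' direction (strong linear independence $\Rightarrow$ $\lambda_{\min}[G_N]\to\infty$), since $G_N^{-1}$ is symmetric positive definite of \emph{fixed} size $n\times n$, I would estimate
\begin{equation*}
\lambda_{\max}[G_N^{-1}] \;\leq\; \mathrm{tr}[G_N^{-1}] \;=\; \sum_{i=1}^n [G_N^{-1}]_{ii} \;=\; \sum_{i=1}^n \frac{1}{\|\tilde c_N^i\|_2^2} \;\longrightarrow\; 0,
\end{equation*}
where the limit uses that each of the \emph{finitely many} summands tends to zero by hypothesis. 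For the ``if'' direction, the elementary bound $[G_N^{-1}]_{ii} = e_i\tp G_N^{-1} e_i \leq \lambda_{\max}[G_N^{-1}] \to 0$ forces $\|\tilde c_N^i\|_2^2\to +\infty$ for every $i=1,\ldots,n$.

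I expect no real obstacle: the single technical point is the Schur-complement identity, which is classical but worth recalling explicitly, together with a remark that $G_N$ is invertible for all $N$ sufficiently large. The fact that $n$ is fixed while $N\to\infty$ is what makes passing from the $n$ diagonal entries to the maximum eigenvalue (via the trace) harmless; were $n$ allowed to grow with $N$, the trace bound would fail and a more delicate argument would be required.
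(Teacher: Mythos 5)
Your proof is correct, but there is no in-paper argument to compare it against: the appendix explicitly states that this corollary is collected from Bottegal and Picci (2015), with the proofs available there, so the paper itself offers no derivation. Your route is a clean, self-contained one: the Schur-complement identity $[(C_N\tp C_N)^{-1}]_{ii}=1/\|\tilde c_N^i\|_2^2$ (isolating the $i$-th row and column of the Gram matrix $G_N=C_N\tp C_N$), combined with $\lambda_{\max}[G_N^{-1}]=1/\lambda_{\min}[G_N]$, the trace bound over the fixed number $n$ of columns in one direction, and the elementary bound $[G_N^{-1}]_{ii}\leq\lambda_{\max}[G_N^{-1}]$ in the other, gives exactly the stated equivalence. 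Two small points are worth making explicit if you write it up: (i) the invertibility of $(\check C_N^i)\tp\check C_N^i$ used inside the Schur complement is automatic once $G_N>0$, since principal submatrices of a positive definite matrix are positive definite; and (ii) the claim that $G_N$ is eventually positive definite under the strong-linear-independence hypothesis deserves a sentence: since $\|\tilde c_N^i\|_2\to\infty$ for each of the finitely many indices $i$, for all large $N$ every residual is nonzero, i.e.\ no column of $C_N$ lies in the span of the remaining ones, which for a finite set of vectors is equivalent to linear independence of the columns (under the other hypothesis, $\lambda_{\min}[G_N]\to\infty$ gives positive definiteness directly). Your closing observation that the trace estimate is harmless only because $n$ is fixed while $N\to\infty$ correctly pinpoints where the argument uses the structure of the problem, and is consistent with how the paper uses condition \eqref{slic} throughout.
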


\begin{corollary}
The covariance $\Sigma$ has a GFA decomposition with $n$ latent
factors if and only if it can be decomposed as the sum of a  matrix
 $\tilde{\Sigma}$ which defines a bounded operator in $\ell^2$ and a
$\rank\; n$ perturbation $\hat{\Sigma}=C C^{\top}$ where $C\in \Rbb
^{\infty \times n}$ has strongly linearly independent columns.
\end{corollary}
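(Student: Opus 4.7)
The plan is to derive the corollary directly from Definition 2 combined with Proposition 1, which already supplies the key equivalence between idiosyncrasy of a process and boundedness of its covariance operator on $\ell^2$. The statement is essentially a covariance-level restatement of the definition of a GFA model once Proposition 1 is invoked, so the proof is short and the main work has already been done upstream.

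For the \emph{only if} direction, I would start from a GFA representation $\yb = C\xb + \tilde{\yb}$ provided by Definition 2, with $C\in\Rbb^{\infty\times n}$ having strongly linearly independent columns, $\xb$ normalized so that $\E[\xb\xb\tp]=I_n$, and $\tilde{\yb}$ idiosyncratic and orthogonal to $\xb$. Taking the outer-product expectation of both sides immediately yields $\Sigma = CC\tp + \tilde{\Sigma}$, where $\tilde{\Sigma}:=\E[\tilde{\yb}\tilde{\yb}\tp]$. Strong linear independence of the columns of $C$ implies in particular their ordinary linear independence, hence $\rank(CC\tp)=n$. Finally, Proposition 1 gives that the idiosyncrasy of $\tilde{\yb}$ is equivalent to $\tilde{\Sigma}$ being a bounded operator on $\ell^2$.

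For the \emph{if} direction, I would assume $\Sigma = CC\tp + \tilde{\Sigma}$ with $\tilde{\Sigma}$ bounded on $\ell^2$ and the columns of $C$ strongly linearly independent, and \emph{construct} a GFA model with this covariance. Take, on some probability space, an $n$-dimensional (e.g.\ Gaussian) random vector $\xb$ with $\E[\xb\xb\tp]=I_n$ and an independent zero-mean Hilbert-space-valued random element $\tilde{\yb}$ with covariance operator $\tilde{\Sigma}$; the latter is well-defined precisely because $\tilde{\Sigma}$ is bounded on $\ell^2$. Setting $\yb := C\xb + \tilde{\yb}$, the covariance of $\yb$ is $\Sigma$, the columns of $C$ are strongly linearly independent by hypothesis, and Proposition 1, applied in the reverse direction, certifies that $\tilde{\yb}$ is idiosyncratic. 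All requirements of Definition 2 are then satisfied.

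The only potentially delicate point is the Hilbert-space construction of a random element $\tilde{\yb}$ with prescribed bounded-operator covariance $\tilde{\Sigma}$ uncorrelated with $\xb$, but this is a standard fact (a countable product of Gaussians with appropriate correlation structure suffices) and carries none of the substance of the claim. The real content of the corollary is entirely concentrated in Proposition 1, which translates ``idiosyncratic noise'' into ``bounded covariance operator on $\ell^2$''; the corollary is then just the covariance-level book-keeping of Definition 2.
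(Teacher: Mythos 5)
The paper itself contains no proof of this corollary: the appendix explicitly defers all proofs to \cite{Bottegal-P-15}, so there is no in-paper argument to compare against. Your argument is essentially sound at the covariance level and uses exactly the expected ingredients: Proposition 1 to translate idiosyncrasy of the noise into boundedness of $\tilde{\Sigma}$ as an operator on $\ell^2$, and strong linear independence of the columns of $C$ (equivalently divergence of $\lambda_{\min}[C_N\tp C_N]$, by the preceding corollary) for the rank-$n$ term; the normalization $\E[\xb\xb\tp]=I$ is harmless since right multiplication of $C$ by an invertible matrix preserves strong linear independence.

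Two caveats. First, a technical slip in the converse: a ``Hilbert-space-valued random element with covariance operator $\tilde{\Sigma}$'' is \emph{not} well defined merely because $\tilde{\Sigma}$ is bounded (an $\ell^2$-valued element would require $\tilde{\Sigma}$ to be trace class; take $\tilde{\Sigma}=I$ as a counterexample). What you actually need, and what suffices, is an infinite random sequence with the prescribed covariance, obtained by Kolmogorov extension; this also makes explicit that positive semidefiniteness of $\tilde{\Sigma}$ must be read into the hypothesis. Second, and more substantively, in this paper a GFA model is a representation of a given process $\yb$, whereas your converse manufactures a fresh process with covariance $\Sigma$ rather than decomposing $\yb$ itself. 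The route in \cite{Bottegal-P-15}, mirrored by Proposition \ref{prop2} of this paper, is intrinsic: setting $\hat{\xb}_N:=(C_N\tp C_N)^{-1}C_N\tp\yb_N$, boundedness of $\tilde{\Sigma}$ together with \eqref{slic} shows that $\hat{\xb}_N$ converges in mean square to a vector $\xb\in H(\yb)$ with $\E[\xb\xb\tp]=I$ and $\E[\yb\xb\tp]=C$, so that $\tilde{\yb}:=\yb-C\xb$ is orthogonal to $\xb$, has covariance $\tilde{\Sigma}$, and is therefore idiosyncratic. That version yields the decomposition of $\yb$ itself (with factors recoverable from the data), which is the reading consistent with the rest of the paper; your construction establishes only that \emph{some} process with covariance $\Sigma$ admits a GFA model. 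You should at least flag this distinction, even if the covariance-level phrasing of the corollary makes your weaker reading defensible.
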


\bibliographystyle{model5-names}      
\bibliography{generalized_factor_models}

\end{document}